\documentclass{article}
\usepackage{amsmath}    
\usepackage{longtable}  
\usepackage{latexsym}	
\usepackage{amssymb}	
\usepackage{epsfig}
\usepackage{algorithmic}

\newtheorem{definition}{Definition}[section]
\newtheorem{algorithm}{Algorithm}[section]

\newtheorem{theorem}{Theorem}[section]
\newtheorem{lemma}{Lemma}[section]

\newtheorem{corollary}{Corollary}[section]
\newtheorem{remark}{Remark}[section]

\newcommand{\qed}{\nobreak \ifvmode \relax \else \ifdim\lastskip<1.5em \hskip-\lastskip \hskip1.5em plus0em minus0.5em \fi \nobreak \vrule height0.75em width0.5em depth0.25em\fi} 

\newenvironment{proof}[1][Proof:]{\begin{trivlist} 
\item[\hskip \labelsep {\bfseries #1}]}{\end{trivlist}} 

\def\0{\bf \0}

\def\B{{\bf B}}

\def\E{{\bf E}}

\def\H{{\bf H}}
\def\I{{\bf I}}

\def\0{{\bf 0}}

\def\R{\mathbb{R}}

\def\T{{\bf T}}

\def\d{{\bf d}}

\def\h{{\bf h}}

\def\p{{\bf p}}

\def\s{{\bf s}}

\def\v{{\bf v}}

\def\x{{\bf x}}
\def\y{{\bf y}}
\def\z{{\bf z}}

\def\T{{\rm T}}

\begin{document}

\title{An arc-search BFGS algorithm for unconstrained nonlinear optimization problems}

\author{
Yaguang Yang\thanks{Goddard Space Flight Center, NASA, 
8800 Greenbelt Rd, Greenbelt, MD 20771. 
Email: yaguang.yang@verizon.net}}

\date{\today}

\maketitle    

\begin{abstract}
The classical BFGS algorithm performs excellently for convex optimization problems. However, for non-convex problems, the classical BFGS method may fail to converge reliably. To overcome this limitation, researchers have developed modified BFGS methods that are applicable to both convex and non-convex optimization problems. Among these methods, a robust BFGS algorithm has been shown to achieve global convergence and fast local convergence, with a superlinear convergence rate, for both convex and non-convex nonlinear optimization problems under mild assumptions. In this paper, we propose an arc-search BFGS algorithm that aims to further improve the computational efficiency of the robust BFGS method while preserving its desirable convergence properties. Numerical experiments are carried out, and performance comparisons between the proposed algorithm and state-of-the-art algorithms are reported to demonstrate the advantages of the arc-search BFGS algorithm.
\end{abstract}

{\bf keywords:} arc-search BFGS algorithm, global convergence, superlinear convergence,
unconstrained optimization.

{\bf AMS subject classifications:} 90C30.

\section{ Introduction}
BFGS algorithm, introduced in the seminal works of C. G. Broyden, R. Fletcher, D. Goldfarb, and D. F. Shanno \cite{broyden70,fletcher70,goldfarb70,shanno70}, is one of the most successful algorithms for unconstrained optimization problems. The algorithm is globally convergent when the objective function is convex \cite{nocedal99}. However, for nonconvex objective functions, Y.-H. Dai showed in \cite{dai02} that the classical BFGS method may fail to converge. Consequently, many modified BFGS methods have been proposed; see, for example, \cite{li01,llw26,yang24,yqh23,yzlc24,zhang05} and the references therein. Among these approaches, the robust BFGS method proposed in \cite{yang24} (based on an idea of \cite{yang15}) has been shown to possess desirable theoretical properties, including global convergence and a local superlinear convergence rate. Moreover, preliminary numerical experiments reported in \cite{yang24} demonstrate that it outperforms several state-of-the-art algorithms.

The goal of this paper is to further improve the efficiency and effectiveness of the robust BFGS method while preserving its favorable convergence properties. To this end, we view the unconstrained optimization problem as a special case of a constrained optimization problem with box constraints, where each variable satisfies $x_i \in (-\infty,\infty),~i=1,2,…,n$. Under this perspective, unconstrained optimization can be treated within a unified constrained optimization framework, which provides additional flexibility for designing modified quasi-Newton updating strategies.

Within this framework, every iterate is an interior point, which naturally connects unconstrained optimization algorithms to the Interior-point method (IPM). Developed in the 1980s, widely popularized in the 1990s, and now recognized as one of the most efficient approaches for constrained optimization \cite{ly84,nocedal99,yang20}, IPMs have been extensively studied in the literature. More recently, it has been shown that arc-search interior-point methods possess superior convergence properties compared with traditional line-search interior-point methods for linear optimization \cite{iy24,yang18}, convex quadratic optimization \cite{yang11,zhl22}, symmetric optimization \cite{kheirfam16,pzm18,ylz17}, linear complementarity problems \cite{sd20,yzh18}, semidefinite optimization \cite{km18,kok21,zyzlh19}, and nonlinear optimization \cite{yiy22,yang25}. In addition, the author showed in \cite{yang18} that an arc-search interior-point method not only achieves the best known polynomial complexity bound for linear optimization problems, but is also competitive with Mehrotra’s algorithm, which has been implemented in many state-of-the-art optimization software packages \cite{wright97}. This resolves a long-standing dilemma discussed in \cite{todd02,wright97,yang20}, namely, how to design an interior-point method based on arc-search techniques that simultaneously achieves strong theoretical polynomial complexity and excellent practical computational performance.

Motivated by the success of the arc-search in IPM, we propose an arc-search BFGS algorithm that combines the advantages of the robust BFGS framework with those of arc-search techniques. We show that the proposed arc-search algorithm naturally satisfies the Wolfe conditions, thereby eliminating the need for a complicated procedure described in \cite{mt90}, which is typically essential for ensuring convergence~\cite{nocedal99}. Furthermore, we establish the global convergence and superlinear convergence rate of the proposed method. Numerical experiments on standard CUTEst benchmark set problems demonstrate that the proposed arc-search BFGS algorithm is highly promising in practice.

Throughout the paper, we use lowercase bold font for vectors, uppercase bold font for matrices, standard font for scalars. To save the space, we use $\x=(x_1, x_2, \ldots, x_n)$ for column vector $\x$. Therefore, we define by $\nabla_{\x} f(\x)=(\frac{df}{dx_1}, \frac{df}{dx_2}, \ldots \frac{df}{dx_n})$ or simply $\nabla_{\x} f$ for the gradient of $f(\x)$, by $\H(\x)=\nabla_{\x}^2 f(\x)$ or simply $\H$ for the Hessian of $f(\x)$. We denote $\H \succ \0$ if a matrix $\H$ is positive definite, $\H \succeq \0$ if $\H$ is positive semi-definite. We  use superscript $k$ to represent the value of a vector (or a matrix) at the $k$th iteration while use the subscript $i$ ($ij$) to represent the $i$ element ($ij$ element) of the vector (or the matrix), and subscript $k$ for the value of the scalar at the $k$ iteration while use the superscript for the power of the scalar. Hence, $\x^0$ is used to represent the initial point of the algorithm and $\alpha_0$ is the initial value of $\alpha$. 

The remainder of the paper is organized as follows. Section~\ref{secarcBFGS} introduces the arc-search BFGS algorithm. Section~\ref{convergenceAnalysis} discusses the algorithm's convergence properties. Section~\ref{implementationDetails} provides the detailed implementation information.  Section~\ref{numericalTest} includes the test results and comparison of proposed algorithm with the established ones. Section~\ref{conclusions} summarizes the conclusions.

\section{The arc-search BFGS Method}\label{secarcBFGS}

This paper proposes an arc-search algorithm to minimize a general (convex or nonconvex) multi-variable nonlinear function
\begin{equation}
\min f(\x),
\label{obj}
\end{equation}
where $f: \R^n \rightarrow \R$ is a twice differentiable function of $\x \in \R^{n}$. Let $\nabla_{\x} f({\x})$ and $\H(\x)$ be the gradient and Hessian matrix of $f(\x)$ respectively. We denote by $\bar{\x}$ a local minimizer of (\ref{obj}) that satisfies
\begin{equation}
\nabla_{\x} f(\bar{\x})=\0.
\label{necc}
\end{equation}

Following the practice of \cite{nocedal99}, we make the standard assumptions on $f(\x)$ which will be used in the convergence analysis. 

\medskip
{\bf Assumptions:}
\begin{itemize}
\item[1.] For an open set ${\cal M}$ containing the level set 
${\cal L}=\{ \x: f(\x) \le f(\x^0) \}$, $\nabla f(\x)$ is Lipschitz continuous, i.e., there exists a constant $L>0$ such that
\begin{equation}
\| \nabla f(\x)-\nabla f(\y)\| \le L \| \x-\y\|,
\label{function}
\end{equation}
for all $\x, \y \in {\cal M}$.
\item[2.] There are positive numbers $\delta>0$, $0<m <1 <M$, and a neighborhood of $\bar{\x}$, defined by 
${\cal N}(\bar{\x})=\{\x: f({\x})-f(\bar{\x}) \le \delta \}$, such that for all
$\x \in {\cal N}(\bar{\x})$ and for all $\z \in \R^n$,
\begin{equation}
m \| \z\|^2 \le \z^{\T} \H(\x) \z \le M \| \z\|^2.
\label{convex}
\end{equation}
\item[3.] There is a positive number $L >0$ such that for all $\x \in {\cal N}(\bar{\x})$,
\begin{equation}
\| \H(\x) - \H(\bar{\x}) \| \le L\| \x-\bar{\x}\|.
\label{Matrix}
\end{equation}
\item[4.] The summation of the distances between the iterates and optimizer is bounded, i.e.,
\begin{equation}
\sum_{k=1}^{\infty} \| \x^k - \bar{\x} \| < \infty.
\label{limitK}
\end{equation}
\end{itemize}
Assumption 1 \cite[(3.13)]{nocedal99} is required when applying the Zoutendijk theorem to establish the global\footnote{We say an algorithm is globally convergent if the convergence does not depend on the selection of the initial point.} convergence for the arc-search BFGS algorithm. Assumption 2 is required to establish the superlinear convergence. It is proved in \cite[Assumption 8.1]{nocedal99} that for all $\x \in {\cal N}(\bar{\x})$, if this strong second order sufficient condition holds, then, there is a unique minimum in the neighborhood ${\cal N}(\bar{\x})$. In Lemma \ref{conditionM}, we establish a relationship between this assumption and the arc-search BFGS algorithm (see Eq. (\ref{gamma}) and the discussion of the selection of $\gamma_k \in [0,1]$). Assumption 3 \cite[Assumption 8.2]{nocedal99} is also needed in the proof of the superlinear convergence. Assumption 4 coincides with \cite[(8.52)]{nocedal99} and is used to prove the superlinear convergence rate. The parameter $L$ in (\ref{function}) may differ from the parameter $L$ in (\ref{Matrix}). However, we can always choose the larger one so that both (\ref{function}) and (\ref{Matrix}) hold for the same value of L, thereby simplifying the notation and the proofs.

\begin{remark}
The first three assumptions are related to the optimization problem itself, while the last assumption pertains to the specific algorithm. It is worthwhile to note that any sufficiently smooth function with isolated local minimizers belongs to the class of functions satisfying the first three assumptions.
\end{remark}

To find a numerical solution of the nonlinear problem (\ref{obj}) using classical optimization methods, the following iteration is carried out until an optimal solution is found
\begin{equation}
	\x^{k+1} = \x^k + \alpha_k \d^k,
\end{equation}
where $\x^k$ is current iterate, $\x^{k+1}$ is the next iterate, $\alpha_k$ is the step size which is normally determined by trial and error method, such as Armijo rule, and $\d^k$ is the search direction. In general, a search direction should be a descent direction as defined below.

\begin{definition}
	A search direction $\d^k$ is descent if $\nabla f(\x^k)^{\T} \d <0$. 
\end{definition}

Denote $\H^k=\H(\x^k)$. A ultra-fast algorithm is the Newton's method whose search direction is determined by
\begin{equation}
	\H^k \d^k = - \nabla f(\x^k).
	\label{newtonD}
\end{equation}
Although Newton's method converges fast locally, the computation of $\H(\x^k)$ is demanding, and the method oftentimes does not converge if an initial point is not close to an optimizer. To avoid high computational cost of the Hessian matrix $\H(\x^k)$, A popular strategy is to optimize a quadratic system 
\begin{equation}
	m_k(\d) = f(\x^k) + \nabla f(\x^k)^{\T} \d + \frac{1}{2} \d^{\T} \B^k \d,
	\label{quadraticM}
\end{equation}
in every iteration $k$, where $\B^k$ is an approximation of $\H^k$, and the search direction $\d^k$ can be determined by the linear system of equations
\begin{equation}
	\B^k \d^k = - \nabla f(\x^k).
	\label{bfgsD}
\end{equation}
The positive definite matrix $\B^k$ can be updated by the DFP formula as follows \cite{nocedal99}:
\begin{equation}
 	\B^{k+1}=\B^k-\frac{\B^k \s^k \s^{k^{\T}} \B^k}{\s^{k^{\T}}\B^k \s^k}
 	+\frac{\y^k \y^{k^{\T}}}{\y^{k^{\T}} \s^k}.
 	\label{BFGS}
\end{equation}
where
\begin{equation}
	\s^k=\x^{k+1}-\x^k,
	\label{sk}
\end{equation}
\begin{equation}
	\y^k=\nabla f(\x^{k+1}) - \nabla f(\x^{k}).
	\label{yk}
\end{equation}
Unlike the computation of $\H(\x^k)$ which involves the second-order derivatives, the computation of $\B^k$ involves only $\x^{k-1}$, $\x^{k}$, and the first-order derivatives $\nabla f(\x^{k-1})$ and $\nabla f(\x^{k})$, therefore is much faster. By multiplying $\s^k$ from the right on both sides of (\ref{BFGS}), we have 
\begin{equation}
	\B^{k+1} \s^k=\y^k.
	\label{secant1}
\end{equation}
The DFP (and its variant BFGS) algorithm has been very popular, but it may not be convergent for nonconvex problem \cite{dai02}. Since the steepest descent algorithm is known to be globally convergent, a robust BFGS, which is based on an idea of \cite{yang15}, is developed in \cite{yang24} that combines the merits of the steepest descent algorithm and BFGS algorithm by considering the quadratic system
\begin{equation}
	n_k(\d) = f(\x^k) + \nabla f(\x^k)^{\T} \d + \frac{1}{2} \d^{\T} (\gamma_k \I + (1-\gamma_k)\B^{k} ) \d = f(\x^k) + \nabla f(\x^k)^{\T} \d + \frac{1}{2} \d^{\T} \E^{k} \d,
	\label{robustM}
\end{equation}
where $\gamma_k \in [0,1]$ is dynamically updated in every iteration. It will be proven that $\gamma_k = 0$ when iterates are in a neighborhood of a minimizer where the function $f$ is locally convex. Therefore, the robust BFGS search direction is defined by
\begin{equation}
	\E^{k} \d^k=- \nabla f(\x^{k}), 
	\label{dirMBFGS}
\end{equation}
and 
the search direction $\d_k$ is calculated by 
\begin{equation}
	\d^k=-(\E^{k})^{-1}  \nabla f(\x^{k}), 
	\label{calMBFGS}
\end{equation}
where the superscript $k$ inside the bracket represents the number of iteration and the superscrit $-1$ outside the bracket represents the inverse of the matrix $(\E^{k})$. It has been shown in \cite{yang24} that $\E^{k+1}$ can be obtained iteratively by
\begin{equation}
	\E^{k+1}=\E^k-\frac{\E^k \s^k \s^{k^{\T}} \E^k}{\s^{k^{\T}}\E^k \s^k}
	+\frac{\z^k \z^{k^{\T}}}{\z^{k^{\T}} \s^k},
	\label{arcBFGS}
\end{equation}
where
\begin{equation}
	\z^k = \gamma_k \s^k +(1-\gamma_k)\y^k.
	\label{zk}
\end{equation}
To avoid the computation of the inverse matrix of $(\E^{k})^{-1}$, using the Sherman-Morrison-Woodbury formula \cite[page 51]{gv89} for (\ref{arcBFGS}), the robust BFGS update is give by
\begin{equation}
	\left( \E^{k+1} \right)^{-1}=
	\left(\I-\frac{ \s^{k} \z^{{k}^{\T}}}{\z^{{k}^{\T}} \s^{k}}\right) \left( \E^{k}  \right)^{-1}
	\left(\I-\frac{\z^{k} \s^{k^{\T}}}{\z^{{k}^{\T}} \s^{k}}\right)
	+\frac{\s^{k} \s^{{k}^{\T}}}{\z^{{k}^{\T}} \s^{k}}.
	\label{E1}
\end{equation} 
In view of (\ref{secant1}), equation (\ref{zk}) can be written as
\begin{equation}
	\z^k = \gamma_k \s^k +(1-\gamma_k)\y^k=(\gamma_k \I + (1-\gamma_k)\B^{k+1} )\s^k := \E^{k+1} \s^k,
	\label{secant2}
\end{equation}
where 
\begin{equation}
	\E^{k+1}=\gamma_k \I + (1-\gamma_k)\B^{k+1}.
	\label{Ekp1}
\end{equation}

\begin{remark}
	It has been shown in \cite{yang15,yang24} that if $\gamma_k =1$, the search direction $\d^k$ in (\ref{secant2}) is reduced to the steepest descent direction; if $\gamma_k =0$, the search direction $\d^k$ is reduced to the BFGS direction. By a smart selection of $\gamma_k$, the robust BFGS algorithm is globally convergent with a locally superlinear convergence rate \cite{yang24}. To achieve the latter, the robust BFGS algorithm should select $\gamma_k \geq 0$ as close to zero as possible when the iterate approaches to the optimizer. On the other hand, if the iterate is far away from the optimizer and $\B^{k+1}$ is ill-conditioned, according to (\ref{Ekp1}), $\E^{k+1}$ is ill-conditioned if $\gamma_k =0$. As a result of (\ref{dirMBFGS}), it may generate an inaccurate search direction $\d^k$. Therefore, it is important to carefully select $\gamma_k$.
	\label{gamma01}
\end{remark}

The arc-search BFGS is aimed at further improving efficiency of robust BFGS while keeping the nice convergence properties of robust BFGS. It is based on the search direction $\d^k$ of the robust BFGS \cite{yang24} and arc-search techniques developed in \cite{yang20}. The proposed arc-search BFGS update is carried out as follows:
\begin{equation}
	\x^{k+1}=\x^k+\sin(\alpha_k) \d^k -(1-\cos(\alpha_k)) \dot{\d}^k,
	\label{updateX}
\end{equation}
where $\dot{\d}$ is the derivative of $\d$ to be defined in (\ref{ddd}). Rewriting (\ref{updateX}) yields
\begin{equation}
	\frac{\x^{k+1}-\x^k}{\alpha_k}=\frac{\sin(\alpha_k)}{\alpha_k} \d^k -\frac{(1-\cos(\alpha_k))}{\alpha_k} \dot{\d}^k,
	\label{fracX}
\end{equation}
Since $\frac{1}{2}\sin^2(\alpha) \le (1-\cos(\alpha)) \le \sin^2(\alpha)$, let $\alpha_k \rightarrow 0$, it follows from (\ref{fracX}) that
\begin{equation}
	\dot{\x}= \d^k,
	\label{dxdk}
\end{equation}
and
 \begin{equation}
 	\ddot{\x}= \dot{\d}^k \approx \frac{\d^{k}-\d^{k-1}}{\alpha_{k-1}}.
 	\label{ddd}
 \end{equation}
Therefore, the formula (\ref{updateX}) can be written as
\begin{equation}
	\x^{k+1}=\x^k+\sin(\alpha_k) \dot{\x} -(1-\cos(\alpha_k)) \ddot{\x},
	\label{updateX1}
\end{equation}
which is exactly the same as other arc-search algorithms in \cite{yang20} (except the signs of $\dot{\x}$ and $\ddot{\x}$ because here $\d^k=\dot{\x}$ is a descent directon while in \cite{yang20} $-\dot{\x}$ is the descent direction).  Using (\ref{calMBFGS}) and (\ref{updateX}), since $\E^{k}$ is an estimation of $\H^k$ when $\gamma_k=0$, we have another estimation of $\dot{\d}^k$.
\begin{eqnarray}
 	\ddot{\x} & = & \dot{\d}^k =  \frac{ d}{d \alpha_{k-1}} \left( -(\E^{k})^{-1}  \nabla f(\x^{k}) \right)   \nonumber \\
 & = & -(\E^{k})^{-1}  \frac{ d}{d \alpha_{k-1}}  \left( \nabla f(\x^{k-1}+\sin(\alpha_{k-1}){\d}^{k-1}-(1-\cos(\alpha_{k-1})) \dot{\d}^{k-1}) \right)   \nonumber \\
 & = & -(\E^{k})^{-1} \H^{k} \left( \cos(\alpha_{k-1}){\d}^{k-1}-\sin(\alpha_{k-1}) \dot{\d}^{k-1} \right) \nonumber \\
 & \approx & - \cos(\alpha_{k-1}){\d}^{k-1}+\sin(\alpha_{k-1}) \dot{\d}^{k-1}
 	\label{ddd1}
\end{eqnarray}

\begin{remark}
	Both (\ref{ddd}) and (\ref{ddd1}) are implemented. The numerical experiments show that the implementation using (\ref{ddd1}) produces slightly better results, as expected.
	\label{whichFormulate}
\end{remark}

Given $\x^k$, $\d^k$, and $\dot{\d}^k$, denote a map $\h(\alpha_k):  \R \rightarrow \R^n$, where $\h(\alpha_k)=\x^k+\sin(\alpha_k) \d^k -(1-\cos(\alpha_k)) \dot{\d}^k$, it follows
\begin{equation}
 f(\x^{k+1}) = f(\x^k+\sin(\alpha_k) \d^k -(1-\cos(\alpha_k)) \dot{\d}^k)  =f(\h(\alpha_k)) :=F(\alpha_k),
\label{denote1}
\end{equation}
where $F(\alpha_k):  \R \rightarrow \R$. It follows that $F(\alpha_k=0)=f(\x^k)$.

Let $m$ and $M$ be defined as in (\ref{convex}), a critical part of the robust BFGS algorithm (as well as the arc-search BFGS) is to select $\gamma_{k}$ carefully as follows:
\begin{equation}
	\check{\gamma}_k=\begin{cases} \frac{m \s^{k^{\T}} \s^k- \y^{k^{\T}} \s^k}{ \s^{k^{\T}} \s^k- \y^{k^{\T}} \s^k} & \text{if } \s^{k^{\T}} \s^k- \y^{k^{\T}} \s^k \neq 0 \\
0  & \text{otherwise }
\end{cases}
	\label{gammak}
\end{equation}
\begin{equation}
	\begin{array}{ll}
		(\bar{\gamma}_k,\underline{\gamma}_k) & = 
\begin{cases} \frac{( \s^k-\y^k)^{\T}(M \s^k-2 \y^k) \pm \sqrt{(( \s^k- \y^k)^{\T}(M \s^k-2 \y^k))^2-4( \s^k- \y^k)^{\T}( \s^k- \y^k) \y^{k^{\T}}(\y^k-M \s^k)}} 	{2( \s^k- \y^k)^{\T}(\s^k- \y^k)}   & \text{ if } \s^k \neq \y^k \\
0 &  \text{ if } \s^k = \y^k  \end{cases}
\\
& =  \begin{cases} \frac{( \s^k- \y^k)^{\T}(M \s^k-2 \y^k) \pm  \sqrt{(M \s^{k^{\T}}( \s^k- \y^k))^2+4(M-1)( \s^{k^{\T}} \s^k \y^{k^{\T}} \y^k-( \y^{k^{\T}} \s^k)^2)}}		{2( \s^k- \y^k)^{\T}( \s^k- \y^k)} & \text{ if }  \s^k \neq \y^k \\
0 &  \text{ if } \s^k = \y^k  \end{cases},
	\end{array}
	\label{gamma4}
\end{equation}

\begin{equation}
	\begin{cases}
	\gamma_{k} \in [ \max \{ 0, \check{\gamma}_k, \underline{\gamma}_k \}, 1  ]  & 	\text{if }  \s_k^{\T}\s_k > \y_k^{\T}\s_k  \\
	\gamma_{k} \in [ 0,1] & \text{if }  \s_k^{\T}\s_k = \y_k^{\T}\s_k  \\
	\gamma_k \in [\max \{ 0, \underline{\gamma}_k \},  1  ] & \text{if }  \s_k^{\T}\s_k < \y_k^{\T}\s_k.
	\end{cases}
	\label{gamma}
\end{equation}
and 

\begin{remark}
	Although the two formulas in (\ref{gamma4}) are equivalent, the second one is numerically much more stable because it ensures a nonnegative value inside the square root.
\end{remark}

Now we are ready to present the arc-search BFGS algorithm.
\begin{algorithm} {\bf arc-search BFGS} \\
	\begin{algorithmic}[1] 
		\STATE Data:  $0<\epsilon$, $0<m<1<M<\infty$, initial $\x^0$, and $\nabla f(\x^0)$.
		\STATE if $\nabla f(\x^0)=0$, stop.
		\STATE $\d^0 = -\nabla f(\x^0)$, $\dot{\d}^0 = 0$, $\gamma_0=0$, and $\E^0=\I$.
		\FOR{ k=0,1,2,...}
        \STATE Determine $\alpha_k$ satisfying the Wolfe condition (to be discussed later); \label{arcSearch}
        \STATE Set $\x^{k+1}=\x^k+\sin(\alpha_k) \d^k -(1-\cos(\alpha_k)) \dot{\d}^k$; 
		\STATE Calculate gradient $ \nabla f(\x^{k+1})$; 
		\STATE If $\|  \nabla f(\x^{k+1}) \|<\epsilon$, stop; 
		\STATE Compute $\s^{k}=\x^{k+1}-\x^{k}$ and $\y^{k}=\nabla f(\x^{k+1})-\nabla f(\x^{k})$; \label{step8}
		\STATE Select $\gamma_{k}$ satisfying (\ref{gamma}), and set $\gamma_{k}=0$ if $m\s_k^{\T}\s_k \leq \y_k^{\T}\s_k$ and $\y_k^{\T}\y_k \leq M  \y_k^{\T}\s_k$;  \label{step9}
		\STATE Compute $\z^{k}=\gamma_{k}\s^k-(1-\gamma_{k})\y^{k}$;
		\STATE Update $\left(\E^{k+1}\right)^{-1}$ using (\ref{E1}); \label{updateE}
		\STATE Compute search direction $\d^{k+1}=-(\E^{k+1})^{-1}  \nabla f(\x^{k+1})$;
		\STATE If $\gamma_{k}=0$, set $\dot{\d}^{k+1}=\0$; Otherwise $\dot{\d}^{k+1}$ may be approximated using (\ref{ddd}) or (\ref{ddd1}); \label{step12}
		\STATE $k \leftarrow k+1$;
		\ENDFOR
	\end{algorithmic}
	\label{newAlgo}
\end{algorithm}

\begin{remark}
	The computation involving the selection of $\gamma_k$ is negligible (requires ${\cal O}(n)$ operations), and arc-search computational cost in Step \ref{arcSearch} is slightly higher than line search but the additional effort is negligible because the computation of arc-search  (\ref{updateX1}) is simple and cheap. Therefore, the cost of arc-search BFGS in each iteration is almost the same as the cost of the classical BFGS (and line search robust BFGS).
	\label{efficiency}
\end{remark}

\section{Convergence Analysis}\label{convergenceAnalysis}

We first introduce the Wolfe conditions in the context of arc-search. Assume that $\d^k$  is a descent direction, the Wolfe conditions are given as follows.
\begin{subequations}
	\begin{align}
		f(\x^{k+1})=f(\x^k+\sin(\alpha_k) \d^k -(1-\cos(\alpha_k)) \dot{\d}^k) \le f(\x^k)+\sigma_1 \alpha_k \nabla f(\x^k)^{\T} \d^k, \label{C1} \\
		\d^{k^{\T}} \nabla f(\x^{k+1}) = \d^{k^{\T}} \nabla f(\x^k+\sin(\alpha_k) \d^k -(1-\cos(\alpha_k)) \dot{\d}^k) \geq \sigma_2 \nabla f(\x^k)^T \d^k,  \label{C2}
	\end{align}
	\label{WolfeC}
\end{subequations}
where $0 < \sigma_1 < \sigma_2<1$. The existence of the Wolfe conditions is established in \cite{wolfe69,wolfe71}. A very complicated algorithm that finds, in finite steps, a point satisfying the Wolfe conditions is given in \cite{mt90}. We will show that the arc-search BFGS satisfies the Wolfe conditions. But we  first show some nice consequence when Wolfe conditions are applied to the arc-search method (\ref{updateX}).

\begin{lemma}
	Suppose Wolfe conditions hold, $\d^k$ is descent, and $\alpha_k > 0$. then $\y^{k^{\T}} \s^k >0$, which means that both $\s^k \neq \0$ and $\y^k \neq \0$ hold.
	\label{ykskp0}
\end{lemma}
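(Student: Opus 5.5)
The plan is to imitate the classical line-search argument, in which the curvature condition (\ref{C2}) forces $\y^{k^{\T}}\s^k>0$. The extra ingredient is the correction term $-(1-\cos(\alpha_k))\dot{\d}^k$ in $\s^k$, which has to be controlled separately.

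\textbf{Step 1 (curvature along $\d^k$).} Subtract $\nabla f(\x^k)^{\T}\d^k$ from both sides of (\ref{C2}) to get
\[
\d^{k^{\T}}\y^k=\d^{k^{\T}}\bigl(\nabla f(\x^{k+1})-\nabla f(\x^k)\bigr)\ge(\sigma_2-1)\nabla f(\x^k)^{\T}\d^k>0 .
\]
This uses $\sigma_2<1$ and the hypothesis that $\d^k$ is descent.

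\textbf{Step 2 (express $\y^{k^{\T}}\s^k$).} By (\ref{updateX}), $\s^k=\sin(\alpha_k)\d^k-(1-\cos(\alpha_k))\dot{\d}^k$, so
\[
\y^{k^{\T}}\s^k=\sin(\alpha_k)\,\y^{k^{\T}}\d^k-(1-\cos(\alpha_k))\,\y^{k^{\T}}\dot{\d}^k .
\]
Throughout, the arc parameter is restricted to $\alpha_k\in(0,\pi/2]$, as is implicit in the arc-search framework of \cite{yang20}; hence $\sin(\alpha_k)>0$.

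\textbf{Step 3 (first case).} Suppose $\dot{\d}^k=\0$. This happens at $k=0$ by Step 3 of Algorithm \ref{newAlgo}, and whenever $\gamma_{k-1}=0$ by Step \ref{step12}. Then $\y^{k^{\T}}\s^k=\sin(\alpha_k)\,\y^{k^{\T}}\d^k>0$ follows directly from Step 1.

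\textbf{Step 4 (second case, the main obstacle).} Suppose $\dot{\d}^k\neq\0$. I would show that the second term is dominated by the first. Using $1-\cos(\alpha_k)\le\sin^2(\alpha_k)$, the Lipschitz bound (\ref{function}) gives $\|\y^k\|\le L\|\s^k\|$, and $\|\s^k\|\le\sin(\alpha_k)\bigl(\|\d^k\|+\sin(\alpha_k)\|\dot{\d}^k\|\bigr)$. Together these give
\[
(1-\cos(\alpha_k))\,|\y^{k^{\T}}\dot{\d}^k|\le \sin^3(\alpha_k)\,L\,\|\dot{\d}^k\|\bigl(\|\d^k\|+\|\dot{\d}^k\|\bigr).
\]
This is of order $\sin^3(\alpha_k)$, whereas the first term is $\sin(\alpha_k)$ times $\y^{k^{\T}}\d^k$. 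Step 1 bounds $\y^{k^{\T}}\d^k$ below by the fixed positive number $(1-\sigma_2)|\nabla f(\x^k)^{\T}\d^k|$. So the first term wins whenever $\sin^2(\alpha_k)$ lies below the ratio of that constant to $L\|\dot{\d}^k\|(\|\d^k\|+\|\dot{\d}^k\|)$.

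This is the step I expect to be delicate: it needs either an explicit upper restriction on $\alpha_k$ built into the arc search, or an argument that the Wolfe-satisfying $\alpha_k$ produced in Step \ref{arcSearch} obeys it. If the bound cannot be made uniform, the fallback is to observe that the algorithm is free to reset $\dot{\d}^k=\0$ at any iteration where the inequality fails, which reduces to Step 3.

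\textbf{Step 5 (conclusion).} Once $\y^{k^{\T}}\s^k>0$ is established, $\s^k\neq\0$ and $\y^k\neq\0$ follow at once, since an inner product with a zero vector vanishes.
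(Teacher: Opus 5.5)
\textbf{Your Step 4 has a genuine gap, as you suspected.} No sharper estimate will close it, because the lemma is false under its stated hypotheses.

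Take $n=2$, $f(\x)=-x_1+\tfrac12 x_1^2-x_2^2$ (its gradient is Lipschitz with $L=2$), $\x^k=\0$, $\d^k=(1,0)$, $\dot{\d}^k=(0,-1)$ and $\alpha_k=\pi/2$. Then:
\begin{itemize}
\item $\nabla f(\x^k)^{\T}\d^k=-1<0$, so $\d^k$ is descent;
\item $\s^k=\x^{k+1}=(1,1)$, $\nabla f(\x^{k+1})=(0,-2)$ and $\y^k=(1,-2)$;
\item (\ref{C1}) reads $-\tfrac32\le-\sigma_1\pi/2$, true for every $\sigma_1\le 3/\pi$;
\item (\ref{C2}) reads $0\ge-\sigma_2$, always true.
\end{itemize}
Yet $\y^{k^{\T}}\s^k=-1<0$. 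Since $\|\dot{\d}^k\|=\|\d^k\|$, even the safeguard (\ref{dotdk_adj}) would leave this $\dot{\d}^k$ untouched.

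Your hope that a Wolfe-satisfying $\alpha_k$ automatically lies below your threshold also fails in principle. Condition (\ref{C2}) together with the Lipschitz bound gives $L\|\s^k\|\|\d^k\|\ge\y^{k^{\T}}\d^k\ge(1-\sigma_2)|\nabla f(\x^k)^{\T}\d^k|$. So the Wolfe conditions bound the step from below, never from above. Your fallback of resetting $\dot{\d}^k=\0$ is a modification of Algorithm~\ref{newAlgo}, not a proof of the lemma. It would also force the arc search to be redone, since $\x^{k+1}$ changes.

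\textbf{The paper's proof has the same hole, less visibly.} After the same first step as yours, it splits on the sign of $\y^{k^{\T}}\dot{\d}^k$. When $\y^{k^{\T}}\dot{\d}^k>0$, it simply assumes $0<\alpha_k<\y^{k^{\T}}\d^k/\y^{k^{\T}}\dot{\d}^k$. It also tacitly uses $\alpha_k\le\pi/2$ to get $1-\cos(\alpha_k)\le\sin^2(\alpha_k)$, which you at least state.

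\textbf{Comparing the two routes.}
\begin{itemize}
\item The paper's case split is sharper than yours. The case $\y^{k^{\T}}\dot{\d}^k\le 0$ contains your Step 3 and needs no smallness at all, since there $\y^{k^{\T}}\s^k\ge\sin(\alpha_k)\,\y^{k^{\T}}\d^k>0$. You should adopt that split.
\item Your Step 4 threshold is better than the paper's. It involves only $L$, $\sigma_2$, $\nabla f(\x^k)$, $\d^k$ and $\dot{\d}^k$. The paper's bound depends on $\y^k$, hence on $\alpha_k$ itself, which makes it circular as a step-size rule.
\end{itemize}

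A correct version of the lemma must add to its hypotheses either such a restriction on $\alpha_k$ or the condition $\y^{k^{\T}}\dot{\d}^k\le0$. With Assumption 1, $\alpha_k\in(0,\pi/2]$ and your explicit threshold added, your argument does prove it.
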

\begin{proof}
First, in view of (\ref{C2}), since $\sigma_2<1$ and $\nabla f(\x^k)^T \d^k <0$, we have
\begin{equation}
	\y^{k^{\T}} {\d^k} =(\nabla f(\x^{k+1})-\nabla f(\x^k))^{\T} {\d}^k  \geq (\sigma_2 -1) \nabla f(\x^k)^T \d^k >0.
\label{middle1}
\end{equation}
The remaining proof is divided into two cases. Case 1 $\y^{k^{\T}} \dot{\d}^k>0$:
If $0<\alpha_k< \frac{\y^{k^{\T}} \d^k}{\y^{k^{\T}}\dot{\d}^k}$, then,
\begin{eqnarray*}
\frac{\y^{k^{\T}} \s^k}{\alpha_k} & = & \y^{k^{\T}} \left( \frac{\d^k \sin(\alpha_k)}{\alpha_k} - \frac{\dot{\d}^k(1-\cos(\alpha_k))}{\alpha_k} \right)  \nonumber \\
& \geq & \frac{\y^{k^{\T}}\d^k \sin(\alpha_k)}{\alpha_k} - \frac{\y^{k^{\T}}\dot{\d}^k \sin^2(\alpha_k)}{\alpha_k}    \nonumber \\
& \geq & \sin(\alpha_k) \left( \frac{\y^{k^{\T}}\d^k}{\alpha_k} - \y^{k^{\T}}\dot{\d}^k \right) >0.
\end{eqnarray*}
Case 2 $\y^{k^{\T}}\dot{\d}^k \leq 0$: In view of (\ref{C2}), (\ref{sk}), and (\ref{updateX}), and (\ref{middle1}), we have
\begin{eqnarray*}
\y^{k^{\T}} \s^k =\y^{k^{\T}}(\d^k \sin(\alpha_k)-\dot{\d}^k(1-\cos(\alpha_k))) \geq \y^{k^{\T}} \d^k \sin(\alpha) \ge (\sigma_2 -1) \nabla f(\x^{k})^{\T} \d^k \sin(\alpha_k) >0 
\end{eqnarray*}
The last inequality holds because $\sigma_2<1$, $\alpha_k>0$, and $\d^k$ is descent.
\hfill \qed
\end{proof}

A corollary follows immediately from Lemma~\ref{ykskp0}.
\begin{corollary}
Suppose Wolfe conditions hold and $\d^k$ is descent. then $\z^{k^{\T}} \s^k >0$, which means that both $\s^k \neq \0$ and $\z^k \neq \0$ hold.
\label{zksk}
\end{corollary}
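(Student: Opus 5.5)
We prove Corollary~\ref{zksk} directly from Lemma~\ref{ykskp0} and the definition (\ref{zk}), $\z^k=\gamma_k\s^k+(1-\gamma_k)\y^k$ with $\gamma_k\in[0,1]$. (Step 11 of Algorithm~\ref{newAlgo} writes the $\y^k$ term with a minus sign, which is presumably a typo; we use (\ref{zk}) and the secant relation (\ref{secant2}), which require the plus sign.) We assume, as in Lemma~\ref{ykskp0}, that $\alpha_k>0$; this is implicit since $\alpha_k=0$ would give $\s^k=\0$.

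Taking the inner product with $\s^k$ gives
\begin{equation*}
\z^{k^{\T}}\s^k=\gamma_k\,\s^{k^{\T}}\s^k+(1-\gamma_k)\,\y^{k^{\T}}\s^k .
\end{equation*}
By Lemma~\ref{ykskp0}, $\y^{k^{\T}}\s^k>0$ and $\s^k\neq\0$, so $\s^{k^{\T}}\s^k>0$. Thus $\z^{k^{\T}}\s^k$ is a convex combination of two strictly positive numbers with weights $\gamma_k$ and $1-\gamma_k$. These weights are nonnegative and sum to one, so at least one of them is positive, and the combination is strictly positive.

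Finally, $\z^{k^{\T}}\s^k>0$ rules out both $\z^k=\0$ and $\s^k=\0$, since either would make the inner product zero. This gives the claim.

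The only step needing care is confirming that $\gamma_k\in[0,1]$ in every branch of (\ref{gamma}), including the override $\gamma_k=0$ in Step~\ref{step9}. Each branch selects $\gamma_k$ from an interval contained in $[0,1]$, so this holds and the argument is essentially immediate.
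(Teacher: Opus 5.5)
Your proof is correct and follows the paper's argument: the paper also takes the inner product of $\s^k$ with $\z^k=\gamma_k\s^k+(1-\gamma_k)\y^k$ (equation (\ref{secant2})) and gets positivity from Lemma~\ref{ykskp0} together with $\gamma_k\in[0,1]$. Stating the hypothesis $\alpha_k>0$ explicitly and using the plus sign from (\ref{zk}) rather than the minus sign in the algorithm's $\z^k$ step are appropriate fixes of points the paper leaves loose.
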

\begin{proof}
	Multiplying $\s^{k^{\T}}$ to the right on both sides of (\ref{secant2}) and invoking Lemma~\ref{ykskp0} prove the claim.
\hfill \qed
\end{proof}

We will use Taylor's theorem \cite[Theorem 2.1]{nocedal99} in the convergence analysis, which is stated as follows:
\begin{theorem}[Taylor's theorem]
	Suppose that $f: \R^n \rightarrow \R$ is twice continuously differentiable. Then for some $t \in (0,1)$, we have
\begin{equation}
	\nabla_{\x} f(\x+ \p)  = \nabla_{\x} f(\x) + \int_0^1 \nabla_{\x}^2 f(\x+ t\p) \p dt.
\end{equation}
	\label{taylor}
\end{theorem}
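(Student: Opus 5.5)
The plan is to derive the integral form of the gradient along the segment from $\x$ to $\x+\p$ by applying the fundamental theorem of calculus componentwise. Because $f$ is twice continuously differentiable, each component $\partial f/\partial x_i$ is continuously differentiable. Consider the scalar auxiliary functions
\begin{equation*}
\phi_i(t)=\frac{\partial f}{\partial x_i}(\x+t\p), \qquad t\in[0,1],\quad i=1,\ldots,n.
\end{equation*}
These are $C^1$ on $[0,1]$, since they compose a $C^1$ function with the affine map $t\mapsto \x+t\p$.

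The first step is the chain rule. It gives
\begin{equation*}
\phi_i'(t)=\sum_{j=1}^n \frac{\partial^2 f}{\partial x_j\partial x_i}(\x+t\p)\,p_j .
\end{equation*}
This is exactly the $i$th component of $\nabla_{\x}^2 f(\x+t\p)\,\p$. Continuity of the second derivatives makes $\phi_i'$ continuous on $[0,1]$, so it is Riemann integrable.

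The second step applies the fundamental theorem of calculus, $\phi_i(1)-\phi_i(0)=\int_0^1\phi_i'(t)\,dt$, for each $i$. Stacking the $n$ identities into a vector, with the integral of a vector-valued function understood componentwise, yields
\begin{equation*}
\nabla_{\x} f(\x+\p)=\nabla_{\x} f(\x)+\int_0^1 \nabla_{\x}^2 f(\x+t\p)\,\p\,dt .
\end{equation*}

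There is no genuine obstacle here. The only point needing care is the phrase ``for some $t\in(0,1)$'' in the statement. That phrase belongs to the scalar mean-value versions, namely $f(\x+\p)=f(\x)+\nabla f(\x+t\p)^{\T}\p$ and the second-order expansion. It is not needed for the integral identity, which holds exactly without choosing any $t$.

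I would therefore prove the integral identity as above. If desired, I would add that the first-order mean-value form follows in the same way by applying the one-variable mean value theorem to $\psi(t)=f(\x+t\p)$. This is why the statement mentions some $t\in(0,1)$. A single intermediate $t$ does not exist in general for the vector gradient identity, so the integral form is the one used later in the analysis, for example to write $\y^k=\int_0^1\H(\x^k+t\s^k)\s^k\,dt$.
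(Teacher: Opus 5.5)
Your proof is correct. It is the standard componentwise fundamental-theorem-of-calculus argument applied to $\phi_i(t)=\partial f/\partial x_i(\x+t\p)$, and the paper gives no argument of its own, simply citing Theorem~2.1 of Nocedal and Wright, whose integral identity is justified exactly this way. Your observation about the phrase ``for some $t\in(0,1)$'' is also right: in the cited theorem it belongs only to the mean-value forms $f(\x+\p)=f(\x)+\nabla f(\x+t\p)^{\T}\p$ and the second-order expansion, while the integral identity holds exactly without any intermediate point.
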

For arc-search method, $\p^k=\sin(\alpha_k) \d^k -(1-\cos(\alpha_k))\dot{\d}^k=\x^{k+1}-\x^k=\s^k$, we have the following 
\begin{corollary}
	Suppose $f: \R^n \rightarrow \R$ and $\h(\alpha_k): \R \rightarrow \R^n$ are twice continuously differentiable. Then for some $t \in (0,1)$, we have
	\begin{equation}
		\nabla_{\x}f(\x^{k+1}) -	\nabla_{\x}f(\x^{k}) = \int_0^1 \H(\x^k +t[\sin(\alpha_k) \d^k -(1-\cos(\alpha_k))\dot{\d}^k]) \s^k dt:=\bar{\H} \s^k,
		\label{Hbar}
	\end{equation}
	\label{corollaryTaylor}
\end{corollary}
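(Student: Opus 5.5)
This is Taylor's theorem (Theorem~\ref{taylor}) specialized to the arc-search step, so the plan is to reduce it to that theorem. Fix $k$ and set
\[
\p^k=\h(\alpha_k)-\x^k=\sin(\alpha_k)\d^k-(1-\cos(\alpha_k))\dot{\d}^k .
\]
By the update rule (\ref{updateX}) and the definition (\ref{sk}), this vector is exactly $\x^{k+1}-\x^k=\s^k$. Apply Theorem~\ref{taylor} with base point $\x=\x^k$ and increment $\p=\p^k$. This gives
\[
\nabla_{\x} f(\x^{k+1})-\nabla_{\x} f(\x^k)=\int_0^1 \H(\x^k+t\p^k)\,\p^k\,dt .
\]
Substituting $\p^k=\s^k$ turns this into the first equality in (\ref{Hbar}).

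The second step is to justify pulling $\s^k$ out of the integral. The vector $\s^k$ does not depend on $t$, and the integrand is a matrix-valued function of $t$ times a fixed vector. By linearity of the integral we get
\[
\int_0^1 \H(\x^k+t\p^k)\s^k\,dt=\Bigl(\int_0^1 \H(\x^k+t\p^k)\,dt\Bigr)\s^k .
\]
So the notation is consistent if we take
\[
\bar{\H}:=\int_0^1 \H(\x^k+t[\sin(\alpha_k)\d^k-(1-\cos(\alpha_k))\dot{\d}^k])\,dt .
\]
This integral is well defined because $f$ is twice continuously differentiable, so $t\mapsto \H(\x^k+t\p^k)$ is continuous on $[0,1]$ and each entry is Riemann integrable. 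We will also record that $\bar{\H}$ is symmetric, as an integral of symmetric matrices; this is not needed for the statement but will be convenient later.

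The hypothesis that $\h$ is twice continuously differentiable is stronger than what this identity needs. The identity only uses the straight segment from $\x^k$ to $\x^{k+1}=\h(\alpha_k)$, not the arc $\h(\cdot)$ itself. We keep the hypothesis only because it matches the statement, and it is automatic since $\h$ is a trigonometric combination of fixed vectors. The phrase ``for some $t\in(0,1)$'' is inherited from Theorem~\ref{taylor} and plays no role in the integral form.

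The only point needing care is the one just described: making clear that the integration path is the chord $\x^k+t\s^k$ rather than the arc. I expect no other obstacle; the result is essentially a relabeling of Theorem~\ref{taylor}.
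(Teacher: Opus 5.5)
Your proposal is correct and follows the paper's route: the paper notes that $\p^k=\sin(\alpha_k)\d^k-(1-\cos(\alpha_k))\dot{\d}^k=\x^{k+1}-\x^k=\s^k$, applies Theorem~\ref{taylor} with $\p=\p^k$, and defines $\bar{\H}$ as the Hessian averaged along that segment. Your added remarks are accurate clarifications of points the paper leaves implicit: that only the chord $\x^k+t\s^k$ matters (not the arc), that the hypothesis on $\h$ and the phrase ``for some $t\in(0,1)$'' are superfluous, and that $\s^k$ factors out of the integral by linearity.
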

where the average Hessian matrix $\bar{\H} = \int_0^1 \H(\x^k +t[\sin(\alpha_k) \d^k -(1-\cos(\alpha_k))\dot{\d}^k]) dt$. The following lemmas establishes a relation between Assumption 2 and the arc-search BFGS algorithm.

\begin{lemma}
Let $\gamma_k$ be selected as (\ref{gamma}) and $0<m<1<M<\infty$.  Then, the following inequalities hold.
\begin{equation}
m \le \frac{\z^{k^{\T}} \s^k}{ \s^{k^{\T}} \s^k} \,\,\,\,\, \text{and} \,\,\,\,\,
\frac{ \z^{k^{\T}} \z^k}{ \z^{k^{\T}} \s^k} \le M ,
\label{conditions}
\end{equation}
where $m$ and $M$ are consistent to the ones in Assumption 2. Moreover, if
\begin{equation}
m \le \frac{\y^{k^{\T}} \s^k}{ \s^{k^{\T}} \s^k} \,\,\,\,\, \text{and} \,\,\,\,\,
\frac{ \y^{k^{\T}} \y^k}{ \y^{k^{\T}} \s^k} \le M,
\label{conditionsA}
\end{equation}
then, the formula (\ref{gamma}) is reduced to $\gamma_k \in [0,1]$.
\label{conditionM}
\end{lemma}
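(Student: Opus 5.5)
The plan is to rewrite both inequalities in (\ref{conditions}) as inequalities in the single scalar $\gamma=\gamma_k$, using $\z^k=\gamma\s^k+(1-\gamma)\y^k$ from (\ref{zk}). Throughout we use $\y^{k^{\T}}\s^k>0$ and $\z^{k^{\T}}\s^k>0$ from Lemma~\ref{ykskp0} and Corollary~\ref{zksk}. Write $a=\s^{k^{\T}}\s^k$ and $b=\y^{k^{\T}}\s^k$. The first condition then reads $\gamma a+(1-\gamma)b\ge ma$, which is equivalent to
\[
\gamma(a-b)\ge ma-b .
\]
The second condition, after multiplying by $\z^{k^{\T}}\s^k>0$, becomes a quadratic inequality $q(\gamma):=\z^{k^{\T}}\z^k-M\z^{k^{\T}}\s^k\le 0$.

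For the linear condition we split on the sign of $a-b$, matching the three branches of (\ref{gamma}).
\begin{itemize}
\item If $a>b$, the condition is exactly $\gamma\ge\check\gamma_k$ by (\ref{gammak}).
\item If $a=b$, it reads $0\ge (m-1)a$, which holds since $m<1$.
\item If $a<b$, dividing by $a-b<0$ gives $\gamma\le (ma-b)/(a-b)$. This right-hand side is at least $1$, because $ma-b\le a-b$ with both sides negative. So every $\gamma\le 1$ works.
\end{itemize}
This explains why $\check\gamma_k$ appears only in the first branch of (\ref{gamma}).

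For the quadratic condition, write $\z^k=\y^k+\gamma(\s^k-\y^k)$ and expand:
\[
q(\gamma)=\gamma^2(\s^k-\y^k)^{\T}(\s^k-\y^k)-\gamma(\s^k-\y^k)^{\T}(M\s^k-2\y^k)+\y^{k^{\T}}(\y^k-M\s^k).
\]
The step is to check that the roots of $q$ are exactly $\bar\gamma_k,\underline\gamma_k$ of (\ref{gamma4}). Then $q(\gamma)\le 0$ holds precisely for $\gamma\in[\underline\gamma_k,\bar\gamma_k]$, since the leading coefficient is positive when $\s^k\ne\y^k$. We also need the roots to be real; the second, manifestly nonnegative form of the discriminant in (\ref{gamma4}) gives this via Cauchy--Schwarz and $M>1$.

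\textbf{Main obstacle.} The quadratic gives an upper bound $\gamma\le\bar\gamma_k$, while (\ref{gamma}) allows all $\gamma$ up to $1$. So one must show $\bar\gamma_k\ge 1$, i.e.\ $q(1)\le 0$. Indeed $q(1)=\s^{k^{\T}}\s^k(1-M)<0$ since $M>1$. Hence $1$ lies between the roots, and $[\max\{0,\underline\gamma_k\},1]\subset[\underline\gamma_k,\bar\gamma_k]$. In the degenerate case $\s^k=\y^k$ we have $\z^k=\s^k$, and both inequalities are immediate: $1\ge m$ and $1\le M$.

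For the ``moreover'' part, assume (\ref{conditionsA}). This means $q(0)\le 0$ and $m a-b\le 0$.
\begin{itemize}
\item Since $q(0)\le 0$ and $q(1)<0$, convexity of $q$ gives $\underline\gamma_k\le 0$.
\item If $a>b$, then $\check\gamma_k=(ma-b)/(a-b)\le 0$.
\end{itemize}
So every lower bound in (\ref{gamma}) is at most $0$, and the admissible set reduces to $\gamma_k\in[0,1]$.
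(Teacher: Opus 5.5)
Your proof is correct and follows essentially the same route as the paper. Both arguments do a three-case analysis of the linear condition $\gamma_k(\s^{k^{\T}}\s^k-\y^{k^{\T}}\s^k)\ge m\s^{k^{\T}}\s^k-\y^{k^{\T}}\s^k$, then use the convex quadratic in $\gamma_k$ with value $(1-M)\s^{k^{\T}}\s^k<0$ at $\gamma_k=1$ to place $1$ strictly between its roots, and finally use its nonpositivity at $\gamma_k=0$ to get $\underline{\gamma}_k\le 0$ in the ``moreover'' part. Your Cauchy--Schwarz check of the discriminant and your sharper conclusion $\check{\gamma}_k\le 0$ (the paper writes $\check{\gamma}_k=0$) are small refinements rather than a different approach.
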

\begin{proof}
First, we show if $\gamma_k$ is selected based on the following condition
\begin{equation}
\left\{ 
\begin{array}{ll}
\gamma_k \in [\max \{ \check{\gamma}_k, 0 \}, 1] &
\text{if }  \s_k^{\T}\s_k > \y_k^{\T}\s_k , \\
\gamma_k \in [0, 1],  &
\text{if } \s_k^{\T}\s_k = \y_k^{\T}\s_k, \\
\gamma_k \in [0, 1 ], &
\text{if } \s_k^{\T}\s_k < \y_k^{\T}\s_k
\end{array} \right.
\label{gamma1}
\end{equation}
then the first inequality of (\ref{conditions}) holds. To this end, we write equivalent inequalities of $m \le \frac{\z^{k^{\T}} \s^k}{ \s^{k^{\T}} \s^k}$ as follows:
\begin{eqnarray}
& \z^{k^{\T}}\s^k=(\gamma_k \s^{k^{\T}} +(1-\gamma_k) \y^{k^{\T}} )\s^k 
\ge m \s^{k^{\T}}\s^k \nonumber \\
\iff & \gamma_k(\s^{k^{\T}}\s^k-\y^{k^{\T}}\s^k)\ge m \s^{k^{\T}}\s^k-\y^{k^{\T}}\s^k.
\label{add1}
\end{eqnarray}
Since $m<1$, $\gamma_k \in [0,1]$, and $\s^{k^{\T}}\s^k >0$, it follows
\begin{equation}
\s^{k^{\T}}\s^k-\y^{k^{\T}}\s^k > m \s^{k^{\T}}\s^k-\y^{k^{\T}}\s^k.
\label{add2}
\end{equation}
Let
\begin{equation}
\check{\gamma}_k=\begin{cases} \frac{m \s^{k^{\T}} \s^k- \y^{k^{\T}} \s^k}{ \s^{k^{\T}} \s^k- \y^{k^{\T}} \s^k}, & \text{if } \s^{k^{\T}} \s^k- \y^{k^{\T}} \s^k \neq 0 \\
0, & \text{otherwise.}
\end{cases}
\label{gamma0}
\end{equation}
We divide the discussion into three cases given in (\ref{gamma1}): Case (a) if $\s^{k^{\T}} \s^k- \y^{k^{\T}} \s^k >0$, since $\s^{k^{\T}}\s^k>m \s^{k^{\T}} \s^k$, it follows that $\check{\gamma}_k<1$. Since $\gamma_k \in [0,1]$, it follows that if we select ${\gamma}_k$ satisfying $1 \ge {\gamma}_k \ge \max \{ \check{\gamma}_k, 0 \}$, or equivalently $\gamma_k \in [\max \{ \check{\gamma}_k, 0 \},1]$, then (\ref{add1}) holds. Case (b) if $\s^{k^{\T}} \s^k-\y^{k^{\T}} \s^k =0$, it follows from (\ref{add2}) that $0>m \s^{k^{\T}} \s^k-\y^{k^{\T}} \s^k$, and inequality (\ref{add1}) holds for any $\gamma_k \in [0,1]$. As discussed in Remark~\ref{gamma01}, we prefer to select the smallest $\check{\gamma}_k=0$ when $\x^k$ is close to the minimizer. Case (c) if $\s_k^{\T}\s_k-\y_k^{\T}\s_k <0$, we have $m \s_k^{\T}\s_k-\y_k^{\T}\s_k <\s_k^{\T}\s_k-\y_k^{\T}\s_k <0$ which shows $\check{\gamma}_k>1$. In this case, for any $\gamma_k \in [0,1]$, inequality (\ref{add1}) holds. Clearly, (\ref{add1}) holds means $m \le \frac{\z^{k^{\T}} \s^k}{ \s^{k^{\T}} \s^k}$ holds.

Next, we show if the following condition
\begin{equation}
\gamma_k \in [\max \{ 0, \underline{\gamma}_k \}, \min \{ \bar{\gamma}_k, 1 \} ]
\label{case3}
\end{equation}
holds, then the second inequality of (\ref{conditions}) holds. To this end, we provide two equivalent inequalities of $\frac{ \z^{k^{\T}} \z^k}{ \z^{k^{\T}} \s^k} \le M$ as follows:
\begin{subequations}
\begin{align}
& \z_k^{\T}\z_k=(\gamma_k \s_k +(1-\gamma_k)\y_k )^{\T}(\gamma_k \s_k +(1-\gamma_k)\y_k ) 
\le M (\gamma_k \s_k +(1-\gamma_k)\y_k )^{\T}\s_k \label{a}\\
\iff & p(\gamma_k) \equiv \gamma_k^2(\s_k-\y_k)^{\T}(\s_k-\y_k) 
+ \gamma_k(\s_k-\y_k)^{\T}(2\y_k-M\s_k)+\y_k^{\T}(\y_k-M\s_k) \le 0. \label{b}
\end{align}
\label{Mcondition}
\end{subequations}
If $\s_k \neq \y_k$, $p(\gamma_k)$ is a quadratic and convex function of 
$\gamma_k$. Since $M>1$, it is easy to see that the strict inequality of (\ref{a}) holds for $\gamma_k=1$, hence $p(1)<0$. Therefore $p(\gamma_k)=0$ has two solutions $\underline{\gamma}_k$ and 
$\bar{\gamma}_k$ satisfying $\underline{\gamma}_k<1<\bar{\gamma}_k$ and for any $\gamma_k \in [\underline{\gamma}_k, \bar{\gamma}_k]$, (\ref{a}) holds. From (\ref{b}), if $\s_k \neq \y_k$,
\begin{equation}
\begin{array}{ll}
\underline{\gamma}_k & = \frac{(\s_k-\y_k)^{\T}(M\s_k-2\y_k)-
\sqrt{((\s_k-\y_k)^{\T}(M\s_k-2\y_k))^2-4(\s_k-\y_k)^{\T}(\s_k-\y_k)\y_k^{\T}(\y_k-M\s_k)}}
{2(\s_k-\y_k)^{\T}(\s_k-\y_k)}  \\
& = \frac{(\s_k-\y_k)^{\T}(M\s_k-2\y_k)-
\sqrt{(M\s_k^{\T}(\s_k-\y_k))^2+4(M-1)(\s_k^{\T}\s_k\y_k^{\T}\y_k-(\y_k^{\T}\s_k)^2)}}
{2(\s_k-\y_k)^{\T}(\s_k-\y_k)}.
\end{array}
\label{gamma2}
\end{equation}
If $\s_k = \y_k$, then, inequality (\ref{b}) reduces to $(1-M)\y_k^{\T}\y_k \le 0$ which holds for $\forall {\gamma}_k \in [0,1]$. Therefore, we may set $\underline{\gamma}_k=0$.
Since $\gamma_k \in [0,1]$, we have
\begin{equation}
\gamma_k \in [\max \{ 0, \underline{\gamma}_k \},  1  ].
\label{gamma3}
\end{equation} 
Combining (\ref{gamma1}) and (\ref{gamma3}), we conclude that if
\begin{equation}
	\begin{cases}
		\gamma_{k} \in [ \max \{ 0, \check{\gamma}_k, \underline{\gamma}_k \}, 1  ]  &
		\text{if }  \s_k^{\T}\s_k > \y_k^{\T}\s_k , \\
		\gamma_{k} \in [ 0,1] & \text{if }  \s_k^{\T}\s_k = \y_k^{\T}\s_k , \\
		\gamma_k \in [\max \{ 0, \underline{\gamma}_k \},  1  ] & \text{if }  \s_k^{\T}\s_k < \y_k^{\T}\s_k, 
	\end{cases}
\label{inProof}
\end{equation}
then, both inequalities of (\ref{conditions}) hold.

Finally, if the first inequality of  (\ref{conditionsA}) holds, i.e., $m \s^{k^{\T}}\s^k-\y^{k^{\T}}\s^k \leq 0$, then for any $\gamma_k \in [0 ,1]$ the inequality of (\ref{add1}) holds, this means $\check{\gamma}_k =0$. If the second inequality of  (\ref{conditionsA}) holds, i.e., $\y_k^{\T}(\y_k-M\s_k) \le 0$, then $p(0) \le 0$, this means $\underline{\gamma}_k \leq 0$. Substituting $\check{\gamma}_k =0$ and $\underline{\gamma}_k \leq 0$ into (\ref{gamma}) yields $\gamma_k \in [0,1]$.
\hfill \qed
\end{proof}

\begin{lemma}[\cite{yang24}]
Assume that $\gamma_k$ is selected as of (\ref{gamma}), and $\E^0\succ 0$. Then $\E^{k}\succ 0$ and $(\E^{k})^{-1}\succ 0$ for all $k >0$.
	\label{succE}
\end{lemma}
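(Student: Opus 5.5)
We argue by induction on $k$, using the update formula (\ref{E1}) for the inverse together with the curvature condition $\z^{k^{\T}}\s^k>0$. For the base case, $\E^0=\I\succ 0$ by Step 3 of Algorithm~\ref{newAlgo}, or more generally by hypothesis, and the inverse of a symmetric positive definite matrix is again positive definite. For the inductive step, assume $\E^k\succ 0$, so that $(\E^k)^{-1}\succ 0$. Then $\d^k=-(\E^k)^{-1}\nabla f(\x^k)$ satisfies $\nabla f(\x^k)^{\T}\d^k=-\nabla f(\x^k)^{\T}(\E^k)^{-1}\nabla f(\x^k)<0$ whenever $\nabla f(\x^k)\neq \0$. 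When the gradient vanishes the algorithm has already stopped, so $\d^k$ is a descent direction. Since $\alpha_k$ is chosen to satisfy the Wolfe conditions, Corollary~\ref{zksk} applies. As a safer alternative, Lemma~\ref{conditionM} gives directly $\z^{k^{\T}}\s^k\ge m\,\s^{k^{\T}}\s^k>0$ once $\s^k\neq\0$. Either way we obtain $\z^{k^{\T}}\s^k>0$.

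Next we show $(\E^{k+1})^{-1}\succ 0$ from (\ref{E1}). Write $\rho_k=1/(\z^{k^{\T}}\s^k)>0$ and $\mathbf{V}_k=\I-\rho_k\z^k\s^{k^{\T}}$. Then (\ref{E1}) reads $(\E^{k+1})^{-1}=\mathbf{V}_k^{\T}(\E^k)^{-1}\mathbf{V}_k+\rho_k\s^k\s^{k^{\T}}$, which is symmetric. Take any $\v\neq\0$ and set $\mathbf{w}=\mathbf{V}_k\v=\v-\rho_k(\s^{k^{\T}}\v)\z^k$. Then
\begin{equation*}
\v^{\T}(\E^{k+1})^{-1}\v=\mathbf{w}^{\T}(\E^k)^{-1}\mathbf{w}+\rho_k(\s^{k^{\T}}\v)^2 .
\end{equation*}
Both terms are nonnegative. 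If the second term vanishes, then $\s^{k^{\T}}\v=0$, hence $\mathbf{w}=\v\neq\0$, and the first term is strictly positive by the induction hypothesis. So $(\E^{k+1})^{-1}\succ 0$, and consequently $\E^{k+1}\succ 0$ as the inverse of a symmetric positive definite matrix.

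We also check that the inverse produced by (\ref{E1}) really is the inverse of the matrix given by (\ref{arcBFGS}). This holds by the Sherman--Morrison--Woodbury identity, as stated in the text, and is well defined because $\s^{k^{\T}}\E^k\s^k>0$ by the induction hypothesis and $\z^{k^{\T}}\s^k>0$. Alternatively, one can argue directly on (\ref{arcBFGS}). For $\v\neq\0$, the Cauchy--Schwarz inequality in the $\E^k$ inner product gives $\v^{\T}\E^k\v-(\s^{k^{\T}}\E^k\v)^2/(\s^{k^{\T}}\E^k\s^k)\ge 0$, with equality only when $\v\parallel\s^k$. In that case the rank-one term $(\z^{k^{\T}}\v)^2/(\z^{k^{\T}}\s^k)$ is strictly positive because $\z^{k^{\T}}\s^k>0$.

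The main obstacle is securing $\z^{k^{\T}}\s^k>0$ at every iteration, which is exactly where the choice of $\gamma_k$ by (\ref{gamma}) and the Wolfe step enter. The dependence on Wolfe can be avoided by invoking Lemma~\ref{conditionM}, which needs only $\s^k\neq\0$. That holds because $\alpha_k>0$ and, by Lemma~\ref{ykskp0}, $\s^k\neq\0$. One should also note that Step 11 of Algorithm~\ref{newAlgo} prints a minus sign in the formula for $\z^k$. The proof uses the definition (\ref{zk}), with the plus sign, which is the one Lemma~\ref{conditionM} is based on.
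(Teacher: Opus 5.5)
Your proof is correct and follows the paper's argument: the paper also gets $\z^{k^{\T}}\s^k>0$ from the first inequality of (\ref{conditions}), citing Corollary~\ref{zksk} as well, and then shows $\v^{\T}(\E^{k+1})^{-1}\v>0$ from (\ref{E1}) with the same split into $\v^{\T}\s^k=0$ (where the form reduces to $\v^{\T}(\E^{k})^{-1}\v$) and $\v^{\T}\s^k\neq 0$ (where the rank-one term is strictly positive). One minor point is that your Lemma~\ref{conditionM} route does not fully ``avoid Wolfe'', because you take $\s^k\neq\0$ from Lemma~\ref{ykskp0}; however, the sufficient-decrease condition (\ref{C1}) alone already gives it, since $\s^k=\0$ would force $f(\x^k)\le f(\x^k)+\sigma_1\alpha_k\nabla f(\x^k)^{\T}\d^k<f(\x^k)$.
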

\begin{proof}
In view of the first inequality of (\ref{conditions}), it must have $\z^{k^{\T}} \s^k >0$. Since $(\E^0)^{-1} =\I \succ 0$, equation (\ref{E1}) implies that  $(\E^{k})^{-1}\succ 0$ for all $k >0$ because of the following reason: if $\v \neq 0$ and $\v^{\T} \s^k =0$, then  $\v^{\T}(\E^{k+1})^{-1}\v=\v^{\T}(\E^{k})^{-1}\v >0$; if $\v \neq 0$ and $\v^{\T} \s^k \neq 0$, since $\frac{\v^{\T}\s^{k} \s^{{k}^{\T}}\v}{\z^{{k}^{\T}} \s^{k}}>0$ due to Corollary \ref{zksk}, it follows $\v^{\T}(\E^{k+1})^{-1}\v>0$. Therefore, $(\E^{k+1})^{-1} \succ 0$, so is $\E^{k+1}\succ 0$.
\hfill \qed
\end{proof}

In view of Lemma~\ref{succE}, pre-multiplying $\nabla f(\x^{k})^{\T} \neq \0$ on both sides of (\ref{calMBFGS}) yields the following
\begin{lemma}
	The search direction $\d^k \neq \0$ of (\ref{calMBFGS}) is a descent direction for every $k>0$.
	\label{descent}
\end{lemma}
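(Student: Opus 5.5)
The plan is to combine Lemma~\ref{succE} with the defining equation (\ref{calMBFGS}) of the search direction, exactly as suggested in the sentence preceding the statement. The algorithm only computes $\d^k$ when it has not stopped, so $\nabla f(\x^k)\neq\0$ at every iterate where a direction is formed. (The case $k=0$ is also covered: $\E^0=\I$ gives $\d^0=-\nabla f(\x^0)$.) Lemma~\ref{succE} gives $\E^k\succ 0$ and $(\E^k)^{-1}\succ 0$ for all $k$, provided each earlier $\gamma_j$ was selected according to (\ref{gamma}). Step~\ref{step9} of Algorithm~\ref{newAlgo} enforces this. The fallback choice $\gamma_j=0$ in that step is permitted by Lemma~\ref{conditionM}, since under (\ref{conditionsA}) the admissible set is all of $[0,1]$.

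\textbf{Nonzero direction.} Because $(\E^k)^{-1}$ is positive definite, it is nonsingular. Hence $\d^k=-(\E^k)^{-1}\nabla f(\x^k)$ vanishes only if $\nabla f(\x^k)=\0$, which has been excluded. So $\d^k\neq\0$.

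\textbf{Descent.} Pre-multiplying (\ref{calMBFGS}) by $\nabla f(\x^k)^{\T}$ gives
\begin{equation*}
\nabla f(\x^k)^{\T}\d^k=-\nabla f(\x^k)^{\T}(\E^k)^{-1}\nabla f(\x^k)<0,
\end{equation*}
where strict negativity follows from positive definiteness and $\nabla f(\x^k)\neq\0$. This is exactly the descent condition of the Definition.

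\textbf{Main obstacle.} The only delicate point is the logical circularity between this lemma and Lemma~\ref{succE}. Lemma~\ref{succE} relies on $\z^{j^{\T}}\s^j>0$, obtained from Corollary~\ref{zksk}, which in turn assumes $\d^j$ is descent and that the Wolfe conditions hold at step $j$. I would handle this by induction on $k$.

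\emph{Base case.} $\E^0=\I$, so $\d^0=-\nabla f(\x^0)$ is descent.

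\emph{Inductive step.} Assume $\d^k$ is descent. Take $\alpha_k>0$ satisfying (\ref{WolfeC}), as required by Step~\ref{arcSearch} and assumed available. Then Corollary~\ref{zksk} gives $\z^{k^{\T}}\s^k>0$. Alternatively, this follows directly from the first inequality of (\ref{conditions}) in Lemma~\ref{conditionM}, since $\s^k\neq\0$ by Lemma~\ref{ykskp0}. The update argument of Lemma~\ref{succE} then yields $(\E^{k+1})^{-1}\succ 0$. The computation above applied at index $k+1$ shows $\d^{k+1}$ is descent, which closes the induction.
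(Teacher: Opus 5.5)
Your argument is correct and is the paper's own: Lemma~\ref{succE} gives $(\E^k)^{-1}\succ 0$, and pre-multiplying (\ref{calMBFGS}) by $\nabla f(\x^k)^{\T}\neq\0$ yields $\nabla f(\x^k)^{\T}\d^k<0$, with $\d^k\neq\0$ following from nonsingularity. Your induction makes explicit the interleaving with the Wolfe step and Corollary~\ref{zksk} that the paper leaves implicit in Lemma~\ref{succE}; state the inductive hypothesis as $(\E^k)^{-1}\succ 0$ rather than only ``$\d^k$ is descent,'' since the update argument needs the former.
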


Besides the inequalities in (\ref{conditions}), the Wolfe conditions also play a critical role in establishing the global convergence of the arc-search BFGS algorithm. Therefore, we show that the arc-search BFGS method satisfies the Wolfe conditions in a natural way. In the following discussion, we use the order notaion
$o(\cdot)$ described in \cite[Page 591]{nocedal99}. Let $\{ \eta_k \}$ and $\{ \nu_k \}$ be two scalar nonnegative infinite sequences, 
We say $\eta_k=o(\nu_k)$ if the ratio $\{ \eta_k / \nu_k \}$ approaches zero, that is 
\begin{equation}
\lim_{k \rightarrow \infty} \frac{\eta_k}{\nu_k} =0.
\label{littleO}
\end{equation}
We refer $o$ as to little o. 

%

\begin{lemma}
Assume $-\infty < \nabla f(\x^k)^{\T} \d^k < 0$ for every iteration $k$. Then there is a small $\alpha_k>0$ in the arc-search algorithm satisfies the Wolfe condition defined in (\ref{WolfeC}) with $0 < \sigma_1 < \sigma_2<1$.
\label{arcWolfe}
\end{lemma}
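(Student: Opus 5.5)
Write $F(\alpha)=f(\h(\alpha))$ with $\h(\alpha)=\x^k+\sin(\alpha)\d^k-(1-\cos(\alpha))\dot{\d}^k$ as in (\ref{denote1}), and work with this one-dimensional function. By the chain rule,
\[
F'(\alpha)=\nabla f(\h(\alpha))^{\T}\bigl(\cos(\alpha)\d^k-\sin(\alpha)\dot{\d}^k\bigr),
\]
so $F'(0)=\nabla f(\x^k)^{\T}\d^k<0$. The arc therefore leaves $\x^k$ tangentially along the descent direction $\d^k$.

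The plan follows the classical existence proof for Wolfe step sizes \cite[Lemma 3.1]{nocedal99}, with $F$ in place of the line function. Two extra facts are needed: $\h$ is bounded and periodic, and condition (\ref{C2}) uses $\d^k$ rather than the true tangent $\h'(\alpha)$.

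\emph{Sufficient decrease.} Set $\ell(\alpha)=f(\x^k)+\sigma_1\alpha\nabla f(\x^k)^{\T}\d^k$. Since $F(0)=\ell(0)$ and $F'(0)<\sigma_1F'(0)=\ell'(0)$, we have $F(\alpha)<\ell(\alpha)$ for all sufficiently small $\alpha>0$. So (\ref{C1}) holds on an interval $(0,\hat\alpha)$. This part is routine.

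\emph{Curvature condition.} For $\alpha$ small, continuity of $\nabla f$ gives
\[
\d^{k^{\T}}\nabla f(\h(\alpha))\to\nabla f(\x^k)^{\T}\d^k<\sigma_2\nabla f(\x^k)^{\T}\d^k,
\]
so (\ref{C2}) fails near $0$. A step satisfying both conditions must therefore be found away from $0$, but still inside the region where (\ref{C1}) holds. Let $\alpha^*$ be the smallest positive point where $F(\alpha)=\ell(\alpha)$.
\begin{itemize}
\item Existence of $\alpha^*$: $\ell(\alpha)\to-\infty$ linearly, while $F$ is bounded because $\h([0,2\pi])$ is compact and $\h$ is $2\pi$-periodic. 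Hence $\alpha^*$ exists; I expect $\alpha^*<2\pi$ after a short argument, or at least finite.
\item Mean value step: $F(\alpha^*)-F(0)=\sigma_1\alpha^*F'(0)$, so there is $\tilde\alpha\in(0,\alpha^*)$ with $F'(\tilde\alpha)=\sigma_1F'(0)>\sigma_2F'(0)$.
\item On $(0,\alpha^*)$ we have $F<\ell$, so (\ref{C1}) holds at $\tilde\alpha$.
\end{itemize}

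The main obstacle is turning $F'(\tilde\alpha)>\sigma_2F'(0)$ into (\ref{C2}). The identity is
\[
F'(\alpha)=\cos(\alpha)\,\d^{k^{\T}}\nabla f(\h(\alpha))-\sin(\alpha)\,\dot{\d}^{k^{\T}}\nabla f(\h(\alpha)).
\]
The extra term and the $\cos$ factor spoil the direct comparison. I will handle this in two cases.
\begin{itemize}
\item If $\dot{\d}^k=\0$ (which Step \ref{step12} enforces whenever $\gamma_k=0$), then $\h(\alpha)=\x^k+\sin(\alpha)\d^k$ is a reparametrised line segment. Apply the classical line-search argument in the variable $t=\sin\alpha\in(0,1]$, then convert back.
\item In general, restrict to $\alpha\in(0,\pi/2)$ and compare the two smooth functions
\[
\phi(\alpha)=\d^{k^{\T}}\nabla f(\h(\alpha))-\sigma_2\nabla f(\x^k)^{\T}\d^k
\]
and $F'(\alpha)-\sigma_2F'(0)$. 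They differ by $O(\sin\alpha)$ times bounded quantities.
\end{itemize}
The first thing I would try in the general case is an intermediate-value argument on the continuous function $\phi$. We know $\phi(0)<0$. I would show $\phi$ becomes nonnegative before (\ref{C1}) is lost, by contradiction: if $\phi<0$ on the whole interval where (\ref{C1}) holds, integrate to bound $F(\alpha)-F(0)$ against $\alpha\sigma_2F'(0)$ and contradict $F(\alpha^*)=\ell(\alpha^*)$, since $\sigma_1<\sigma_2$. Then $\alpha_k$ is taken as the first zero of $\phi$, and the $\sin(\alpha)\dot{\d}^k$ correction has to be absorbed, possibly using boundedness of $\dot{\d}^k$.

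If this comparison cannot be closed for large $\dot{\d}^k$, the fallback is to note that the assumption $-\infty<\nabla f(\x^k)^{\T}\d^k$ and continuity give the result for small $\alpha$ in the regime where the arc is dominated by its linear term. I would state clearly what smallness is used.
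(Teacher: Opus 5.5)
Your sufficient-decrease step is fine. The gap is the step you flag as the main obstacle: nothing you outline turns $F'(\tilde\alpha)>\sigma_2F'(0)$ into (\ref{C2}). In your reading, with $0<\sigma_1<\sigma_2<1$ fixed in advance, no argument can, because the arc has bounded reach.

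Take the strongly convex quadratic $f(\x)=\tfrac{\epsilon}{2}x_1^2-x_1+\tfrac12\sum_{i\ge2}x_i^2$ with $\x^0=\0$. Then $\d^0=-\nabla f(\x^0)=\mathbf{e}_1$, $\dot{\d}^0=\0$ and $\h(\alpha)=\sin(\alpha)\mathbf{e}_1$, so
\[
\d^{0^{\T}}\nabla f(\h(\alpha))=\epsilon\sin(\alpha)-1,\qquad \sigma_2\nabla f(\x^0)^{\T}\d^0=-\sigma_2 .
\]
Hence (\ref{C2}) fails for every $\alpha$ once $\epsilon<1-\sigma_2$. For $\epsilon=0$ it fails for every $\sigma_2<1$ (alter $f$ away from the arc if you want it bounded below). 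This is why each of your routes breaks:
\begin{itemize}
\item The substitution $t=\sin(\alpha)\in(0,1]$ cannot reproduce the classical lemma. Its crossing argument needs an unbounded ray along which $f$ is bounded below. In the example, every $t$ with $\psi'(t)\ge\sigma_2\psi'(0)$, where $\psi(t)=f(\x^0+t\d^0)$, satisfies $t\ge(1-\sigma_2)/\epsilon>1$, so it is unreachable.
\item The integration-by-contradiction needs $\sigma_2\sin(\alpha^*)\ge\sigma_1\alpha^*$, plus control of the $\dot{\d}^k$ term. This fails unless $\alpha^*$ is small. Also, $\alpha^*$ need not lie in $(0,\pi/2)$: for $\epsilon=0$ it is the first positive root of $\sin(\alpha)=\sigma_1\alpha$, which exceeds $\pi/2$ when $\sigma_1<2/\pi$.
\item The small-$\alpha$ fallback contradicts your own correct observation that (\ref{C2}) fails as $\alpha\to0$ for fixed $\sigma_2<1$.
\end{itemize}

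The paper avoids this only by proving a weaker statement under an extra hypothesis. Its proof is a local Taylor expansion in $\alpha_k$. It gets (\ref{C1}) with $\sigma_1=1-p_2$. For (\ref{C2}) it invokes $\d^{k^{\T}}\H\d^k>0$ to obtain $\d^{k^{\T}}\nabla f(\x^{k+1})\ge(1-\alpha_kp_3)\nabla f(\x^k)^{\T}\d^k$, and then \emph{defines} $\sigma_2=1-\alpha_kp_3$. So the constants are chosen after $\alpha_k$, with $\sigma_2\to1$ as $\alpha_k\to0$. Positive curvature along $\d^k$ is also used, although it is not among the lemma's hypotheses; the case $\epsilon=0$ shows it cannot be dropped. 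Your observation that (\ref{C2}) fails near $\alpha=0$ is exactly why the paper must let $\sigma_2$ depend on $\alpha_k$.

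To get a correct proof, state that weaker form and the curvature assumption explicitly. Then the expansion $\d^{k^{\T}}\nabla f(\h(\alpha))=\nabla f(\x^k)^{\T}\d^k+\alpha\,\d^{k^{\T}}\H(\x^k)\d^k+o(\alpha)$, together with your sufficient-decrease step, is enough. The crossing point $\alpha^*$, the mean-value step and the case split become unnecessary. A version with $\sigma_1,\sigma_2$ fixed in advance would need an additional hypothesis guaranteeing that the arc actually reaches a Wolfe point. It does not follow from what is assumed.
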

\begin{proof}
By the definition of little o, for $\alpha$ small enough, there is a positive constant $0<p_2$ such that
\begin{equation}
p_2 \alpha \nabla f(\x^{k})^{\T} \d^k \leq o(\alpha) \leq -p_2 \alpha \nabla f(\x^{k})^{\T} \d^k.
	\label{Ltmp1}
\end{equation}
Since 
\begin{equation}
\frac{1}{2}\sin^2(\alpha) \le (1-\cos(\alpha)) \le \sin^2(\alpha),
\label{sincos}
\end{equation}
\begin{eqnarray} 
f(\x^{k+1}) & =&  f(\x^k+\sin(\alpha_k) \d^k -(1-\cos(\alpha_k)) \dot{\d}^k)   \nonumber  \\
& = & f(\x^k) + \nabla f(\x^k)^{\T} (\x^{k+1}-\x^k) + o(\| \x^{k+1}-\x^k \| )    \nonumber  \\
	& = & f(\x^{k}) + \nabla f(\x^k)^{\T} [ \sin(\alpha_k) \d^k -(1-\cos(\alpha_k)) \dot{\d}^k] + o(\alpha_k)  \nonumber  \\
	& = & f(\x^k) + \left[  \sin(\alpha_k)\nabla f(\x^k)^{\T} \d^k - (1-\cos(\alpha_k))  \nabla f(\x^k)^{\T}  \dot{\d}^k  \right] + o(\alpha_k).
\end{eqnarray}	
The magnitude of the last item in the square bracket is in the order of $o(\alpha_k)$. To simplify our notation, we will abuse the notation of $o(\alpha_k)$ and denote all higher-order terms of $\alpha$ as $o(\alpha_k)$. Then in view of (\ref{Ltmp1}), for small $\alpha_k$, $f(\x^{k+1}) $ can be written as
\begin{eqnarray}
f(\x^{k+1}) 	& = & f(\x^k) +\sin(\alpha_k)  \nabla f(\x^k)^{\T} \d^k + o(\alpha_k)  \nonumber  \\
	& \leq &  f(\x^k) +\alpha_k \nabla f(\x^k)^{\T} \d^k -p_2 \alpha_k \nabla f(\x^k)^{\T} \d^k	 \nonumber  \\
	& \leq &  f(\x^k) +(1-p_2)\alpha_k \nabla f(\x^k)^{\T} \d^k    \nonumber  \\
	& = & f(\x^k) + \sigma_1 \alpha_k \nabla f(\x^k)^{\T} \d^k. 
\end{eqnarray}

Similarly, for small $\alpha$, $\nabla f(\x^{k+1})$ can be written as
\begin{eqnarray} 
	\nabla f(\x^{k+1})  & =& \nabla f(\x^{k}) +\H ( \x^{k+1} - \x^k) + o(\| \x^{k+1}-\x^k \| )    \nonumber  \\
	& = &  \nabla f(\x^{k}) +\H  [ \sin(\alpha_k) \d^k -(1-\cos(\alpha_k)) \dot{\d}^k]  + o(\alpha_k)  \nonumber  \\
	& = & \nabla f(\x^{k})  + \sin(\alpha_k) \H   \d^k   + o(\alpha_k),
	\label{wolfe2}
\end{eqnarray}
Pre-multiplying $\d^{k^{\T}}$ to both sides of (\ref{wolfe2}) yields
\begin{eqnarray} 
	\d^{k^{\T}} \nabla f(\x^{k+1})
	= \d^{k^{\T}}\nabla f(\x^{k})   + \sin(\alpha_k) \left( \d^{k^{\T}} \H \d^k \right)  + o(\alpha_k).
	\label{wolfe3}
\end{eqnarray}
For $\alpha_k>0$ small enough, since $\d^{k^{\T}} \H \d^k > 0$ and $\lim_{\alpha_k \rightarrow \infty} \frac{\sin(\alpha_k)}{\alpha_k}=1$, there exists a $p_3>0$, such that 
\begin{equation}
\sin(\alpha_k)  \d^{k^{\T}} \H \d^k  +  o(\alpha_k)  > -\alpha_k p_3 \d^{k^{\T}}\nabla f(\x^{k})  ,
\end{equation}
therefore,
\begin{eqnarray} 
\d^{k^{\T}}  \nabla f(\x^{k+1})	\geq (1-\alpha_k p_3) \d^{k^{\T}}\nabla f(\x^{k}) = \sigma_2  \d^{k^{\T}} \nabla f(\x^{k}).
	\label{wolfe4}
\end{eqnarray}
Clearly, we can select $p_3$ small enough such that $\sigma_1 < \sigma_2<1$. 
\hfill \qed
\end{proof}


Now we are ready to discuss the global convergence result for the arc-search BFGS algorithm. Denote 
\begin{equation}\label{cosTheta}
  \cos(\theta_k)=-\frac{\nabla f(\x^{k})^{\T} \d^k}{\| \nabla f(\x^{k}) \| \| \d^k \|},
\end{equation} 
which measures the quality of the search direction.
Our global convergence proof is based on a modified (arc-search) Zoutendijk theorem \cite{zoutendijk70} which can be stated as follows.
\begin{theorem}
	Suppose that $f$ is bounded below in $\R^n$ and that $f$ is continuously twice differentiable in an open set ${\cal M}$ containing the level set ${\cal L}=\{ \x: f(\x) \le f(\x_0) \}$. Assume that the gradient is Lipschitz continuous on 	${\cal M}$, i.e., there exists a constant $L>0$ such that
	\begin{equation}
		\|\nabla f(\x)-\nabla f(\y) \| \le L \| \x-\y\|,
	\end{equation}
	for all $\x, \y \in {\cal M}$. Assume further that $\d^k$ is a 	descent direction and a small $\alpha_k$ ($\frac{ \| \d^k \|}{ \| \dot{\d}^k \|} \geq \alpha_k >0$) satisfies the Wolfe condition. Then
	\begin{equation}
		\sum_{k \ge 0} \cos^2(\theta_k) \| \nabla f(\x^{k}) \|^2 < \infty.
\label{zou}
	\end{equation}
	\label{Zoutendijk}
\end{theorem}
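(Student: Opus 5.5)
We follow the classical Zoutendijk argument, with the arc $\x^{k+1}=\x^k+\sin(\alpha_k)\d^k-(1-\cos(\alpha_k))\dot{\d}^k$ in place of the line step. The strategy is to get a lower bound on $\alpha_k$ from the curvature condition (\ref{C2}), turn it into a guaranteed decrease through the sufficient decrease condition (\ref{C1}), and then telescope using that $f$ is bounded below.

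\emph{Lower bound on the step.} Subtracting $\nabla f(\x^k)^{\T}\d^k$ from both sides of (\ref{C2}) gives
\[
(\nabla f(\x^{k+1})-\nabla f(\x^k))^{\T}\d^k \ge (\sigma_2-1)\nabla f(\x^k)^{\T}\d^k>0.
\]
Since (\ref{C1}) gives $\x^{k+1}\in{\cal L}\subset{\cal M}$, the Lipschitz condition bounds the left side by $L\|\s^k\|\|\d^k\|$. To bound $\|\s^k\|$, use (\ref{sincos}) and $|\sin\alpha_k|\le\alpha_k$:
\[
\|\s^k\|\le \alpha_k\|\d^k\|+\alpha_k^2\|\dot{\d}^k\|.
\]
The hypothesis $\alpha_k\le\|\d^k\|/\|\dot{\d}^k\|$ gives $\alpha_k^2\|\dot{\d}^k\|\le\alpha_k\|\d^k\|$, so $\|\s^k\|\le 2\alpha_k\|\d^k\|$. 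Hence
\[
\alpha_k \ge \frac{\sigma_2-1}{2L}\,\frac{\nabla f(\x^k)^{\T}\d^k}{\|\d^k\|^2}.
\]
If $\dot{\d}^k=\0$ the bound holds trivially, and even without the factor $2$.

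\emph{Decrease and summation.} Substituting this bound into (\ref{C1}) and using $\nabla f(\x^k)^{\T}\d^k<0$ gives
\[
f(\x^{k+1})\le f(\x^k)-c\,\cos^2(\theta_k)\|\nabla f(\x^k)\|^2,
\qquad c=\frac{\sigma_1(1-\sigma_2)}{2L}>0,
\]
where $\cos(\theta_k)$ is as in (\ref{cosTheta}). Summing from $k=0$ to $N$ and using that $f$ is bounded below yields $c\sum_k\cos^2(\theta_k)\|\nabla f(\x^k)\|^2\le f(\x^0)-\inf f<\infty$, which is (\ref{zou}).

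\emph{Main obstacle.} The delicate step is bounding $\|\s^k\|$ linearly in $\alpha_k\|\d^k\|$. This is exactly where the assumption $\alpha_k\le\|\d^k\|/\|\dot{\d}^k\|$ enters: without it the curvature term $(1-\cos\alpha_k)\dot{\d}^k$ could dominate. One also has to make sure the sufficient decrease condition (\ref{C1}) is stated with $\alpha_k$ rather than $\sin\alpha_k$; it is, so the substitution above goes through directly.
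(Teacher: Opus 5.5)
Your proof is correct and is essentially the paper's argument. Condition (\ref{C2}) plus the Lipschitz bound, with $\|\s^k\|\le 2\alpha_k\|\d^k\|$ obtained from $\alpha_k\le\|\d^k\|/\|\dot{\d}^k\|$, gives $\alpha_k\ge(\sigma_2-1)\nabla f(\x^k)^{\T}\d^k/(2L\|\d^k\|^2)$; you then substitute this into (\ref{C1}) and telescope with the same constant $C=\sigma_1(1-\sigma_2)/(2L)$, exactly as the paper does, and your remark that (\ref{C1}) keeps $\x^{k+1}$ in ${\cal L}\subset{\cal M}$, so the Lipschitz bound applies, makes explicit a point the paper leaves implicit.
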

\begin{proof}
In view of (\ref{C2}), we have
\begin{equation}
(\nabla f(\x^{k+1}) -\nabla f(\x^{k}))^{\T} \d_k \geq (\sigma_2 -1) \nabla f(\x^{k})^{\T} \d_k.
\label{C2T}
\end{equation}
Using Lipschitz condition (\ref{function}), (\ref{updateX}), and (\ref{sincos}) yields
\begin{eqnarray}
(\nabla f(\x^{k+1}) -\nabla f(\x^{k}))^{\T}  \d^k  & \leq &  \| \nabla f(\x^{k+1}) -\nabla f(\x^{k}) \| \| \d^k \|  \nonumber \\
  & \leq &  L \| \x^{k+1} - \x^k \| \| \d^k \|   \nonumber \\
  & \leq &  L \| \sin(\alpha_k) \d^k -(1-\cos(\alpha_k)) \dot{\d}^k \|  \| \d^k \|  \nonumber \\
  & \leq &  L \left( \sin(\alpha_k)  \| \d^k\| + \sin^2(\alpha_k) \| \dot{\d}^k \| \right) \| \d^k\|  \nonumber \\
  & \leq &  \alpha_k L ( \| \d^k \|^2+  \alpha_k \| \dot{\d}^k \|  \| \d^k\| ) .
\label{arcL}
\end{eqnarray}
For $\alpha_k \leq \frac{ \| \d^k \|^2}{ \| \dot{\d}^k \|  \| \d^k\|}$, inequality (\ref{arcL}) can be written as (if $\| \dot{\d}^k \| = 0$, according to (\ref{arcL}), the following inequality still holds)
\begin{eqnarray}
(\nabla f(\x^{k+1}) -\nabla f(\x^{k}))^{\T}  \d^k   \leq  2\alpha_k   L  \| \d^k \|^2.
\label{arcL1}
\end{eqnarray}
In view of (\ref{C2T}), this yields
\begin{equation}
\alpha_k \geq \frac{(\nabla f(\x^{k+1}) -\nabla f(\x^{k}))^{\T}  \d^k}{2L  \| \d^k \|^2} \geq \frac{(\sigma_2 -1) \nabla f(\x^{k}))^{\T} \d^k}{2L  \| \d^k \|^2}.
\end{equation}
Substituting this inequality into (\ref{C1}) and using (\ref{cosTheta}) yield
\begin{eqnarray}
 f(\x^{k+1})  &  \leq   &  f (\x^k) + \sigma_1 \alpha_k \nabla f(\x^{k})^{\T}  \d^k \nonumber \\
   & \leq &  f (\x^k)  - \frac{\sigma_1(1-\sigma_2)}{2L} \frac{(\nabla f(\x^{k})^{\T}  \d^k)^2}{\| \d^k \|^2} \nonumber \\
   & = &  f (\x^k)  - C \cos^2(\theta_k) \| \nabla f(\x^{k}) \|^2   \nonumber \\
   & \leq &  f (\x^0)  - C \sum_{j=0}^k \cos^2(\theta_j) \| \nabla f(\x^{j}) \|^2
\label{importantIneq}
\end{eqnarray}
where $C=\frac{\sigma_1(1-\sigma_2)}{2L}$. Since $f$ is bounded below, inequality (\ref{importantIneq}) implies that (\ref{zou}) holds.
\hfill \qed
\end{proof}

\begin{remark}
The modified (arc-search) Zoutendijk theorem indicates that the convergence properties are directly related to the quality of the search direction $\cos(\theta_k)$. If $\d^k$ is a descent direction and $\cos(\theta_k) \ge \delta >0$, for all $k$, then the algorithm is globally convergent because $\lim_{k \rightarrow \infty} \nabla_{\x} f({\x^k})=\0$. This property will be used to prove the global and superlinear convergence of Algorithm~\ref{newAlgo}. 
\label{dfdxgx}
\end{remark}

The following theorem is the main result of \cite[Theorem 8.5]{nocedal99}.

\begin{theorem}\cite[Theorem 8.5]{nocedal99}
  Let $\s^k$ and $\z^k$ be generated by Algorithm \ref{newAlgo} which meet the inequalities of (\ref{conditions}), i.e.,
  \begin{equation}
  m \le \frac{\z^{k^{\T}} \s^k}{ \s^{k^{\T}} \s^k} \,\,\,\,\, \text{and} \,\,\,\,\,
\frac{ \z^{k^{\T}} \z^k}{ \z^{k^{\T}} \s^k} \le M
  \label{repeatM}
\end{equation}
Then, there exists a subsequence indices $\{ j_k \}$ such that
\begin{equation}
\cos(\theta_{j_k}) \ge \delta > 0.
\label{subseq}
\end{equation}
\label{nocedal}
\end{theorem}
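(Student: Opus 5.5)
We follow the classical Byrd--Nocedal trace--determinant argument behind \cite[Theorem 8.5]{nocedal99}, applied to the matrices $\E^k$ generated by (\ref{arcBFGS}). Lemma~\ref{conditionM} supplies the curvature bounds (\ref{repeatM}); there $\z^k$ plays the role of $\y^k$ in the classical proof, and we have $\E^{k+1}\s^k=\z^k$. By Lemma~\ref{succE}, every $\E^k$ is positive definite. This lets us use the potential
\begin{equation*}
\psi(\E)=\mathrm{tr}(\E)-\ln\det(\E),
\end{equation*}
which is positive for any $\E\succ\0$.

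\textbf{Trace and determinant recursions.} Taking the trace of (\ref{arcBFGS}) gives
\begin{equation*}
\mathrm{tr}(\E^{k+1})=\mathrm{tr}(\E^k)-\frac{\|\E^k\s^k\|^2}{\s^{k^{\T}}\E^k\s^k}+\frac{\|\z^k\|^2}{\z^{k^{\T}}\s^k}.
\end{equation*}
The standard rank-two determinant identity gives
\begin{equation*}
\det(\E^{k+1})=\det(\E^k)\,\frac{\z^{k^{\T}}\s^k}{\s^{k^{\T}}\E^k\s^k}.
\end{equation*}
Next define
\begin{equation*}
\cos\tilde\theta_k=\frac{\s^{k^{\T}}\E^k\s^k}{\|\s^k\|\,\|\E^k\s^k\|},\qquad q_k=\frac{\s^{k^{\T}}\E^k\s^k}{\cos^2\tilde\theta_k\,\|\s^k\|^2}.
\end{equation*}
With these, we obtain
\begin{equation*}
\psi(\E^{k+1})=\psi(\E^k)+\frac{\|\z^k\|^2}{\z^{k^{\T}}\s^k}-\ln\frac{\z^{k^{\T}}\s^k}{\|\s^k\|^2}-1+\Bigl[1-q_k+\ln q_k\Bigr]+\ln\cos^2\tilde\theta_k.
\end{equation*}

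\textbf{Summing the recursion.} By (\ref{repeatM}), the middle terms are at most $M-\ln m-1=:c$. Summing then yields
\begin{equation*}
0<\psi(\E^{k+1})\le\psi(\E^0)+c(k+1)+\sum_{j=0}^{k}\Bigl(\ln\cos^2\tilde\theta_j+\bigl[1-q_j+\ln q_j\bigr]\Bigr).
\end{equation*}
Every bracketed summand is nonpositive. Suppose $\cos\tilde\theta_j\to0$ along the whole tail. Then the sum eventually beats the linear term $c(k+1)$, which contradicts positivity. Hence there is a subsequence $\{j_k\}$, which in fact has positive density, on which $\cos\tilde\theta_{j}\ge\delta>0$ uniformly.

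\textbf{Transferring the angle: the main obstacle.} In line-search BFGS we have $\s^k=\alpha_k\d^k$, so $\E^k\s^k=-\alpha_k\nabla f(\x^k)$ and $\tilde\theta_k=\theta_k$ exactly. In the arc-search method, however, $\s^k=\sin(\alpha_k)\d^k-(1-\cos\alpha_k)\dot{\d}^k$.

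Step~\ref{step12} of Algorithm~\ref{newAlgo} sets $\dot{\d}^{k+1}=\0$ when $\gamma_k=0$. In that case, and whenever $\dot{\d}^k=\0$, the identity $\tilde\theta_k=\theta_k$ holds again. In general, we would try to use the small-step restriction of Theorem~\ref{Zoutendijk}, namely $\alpha_k\le\|\d^k\|/\|\dot{\d}^k\|$, together with (\ref{sincos}). The aim is to show that $\s^k$ is a perturbation of $\sin(\alpha_k)\d^k$ of relative size $O(\alpha_k)$, and hence that $\cos\theta_k\ge\cos\tilde\theta_k-O(\alpha_k)$. This would transfer the bound up to a constant adjustment of $\delta$.

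If that perturbation is too crude, the fallback is to apply the trace--determinant analysis directly with the direction vector $\d^k$, bounding the cross terms through the same inequality. Controlling this discrepancy is the step we expect to be delicate. The Byrd--Nocedal counting itself is routine.
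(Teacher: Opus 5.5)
\textbf{The trace--determinant part is correct; the transfer from $\tilde\theta_k$ to $\theta_k$ is a genuine gap.} The paper gives no argument for this statement beyond the citation of \cite[Theorem 8.5]{nocedal99}, whose proof is exactly the trace--determinant computation you reproduce. Your recursion for $\psi$, the constant $c=M-\ln m-1$ obtained from (\ref{repeatM}), and the conclusion that $\cos\tilde\theta_j\ge\delta$ along a subsequence are all correct. You are also right that the cited proof uses $\s^k=\alpha_k\d^k$ to identify $\tilde\theta_k$ with $\theta_k$, and that the arc step breaks this. The transfer you sketch fails for two reasons.

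\emph{The perturbation is not small.} From (\ref{sincos}) and $\alpha_k\le\|\d^k\|/\|\dot{\d}^k\|$ you only get $\|(1-\cos\alpha_k)\dot{\d}^k\|\le\sin\alpha_k\,\alpha_k\|\dot{\d}^k\|\le\sin\alpha_k\|\d^k\|$. That is a relative perturbation of size up to $1$, not $O(\alpha_k)$. Even with the cap $\|\dot{\d}^k\|\le 10\|\d^k\|$ from (\ref{dotdk_adj}) it is only bounded by $10\alpha_k$, and the Wolfe steps $\alpha_k$ are not small in general.

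\emph{Conditioning of $\E^k$ is not controlled.} A small relative perturbation of $\v$ does not control the angle between $\v$ and $\E^k\v$ unless the condition number of $\E^k$ is bounded. The $\psi$-argument gives no such bound: it permits $\lambda_{\min}(\E^k)\sim e^{-ck}$, and at best bounds $\E^j$ along the single direction $\s^j$. Concretely, take $\E^k=\mathrm{diag}(1,\lambda)$, $\d^k=(1,\epsilon)$ and $\dot{\d}^k=\frac{\sin\alpha_k}{1-\cos\alpha_k}(0,\epsilon)$. Then $\s^k=\sin\alpha_k(1,0)$, so $\cos\tilde\theta_k=1$. Meanwhile $\cos\theta_k\approx 1/(\lambda\epsilon)\to 0$ when $\lambda\epsilon\to\infty$ and $\lambda\epsilon^2\to 0$. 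This $\dot{\d}^k$ satisfies $\alpha_k\le\|\d^k\|/\|\dot{\d}^k\|$, and also the cap in (\ref{dotdk_adj}) whenever $\alpha_k$ is not tiny compared with $\epsilon$. So no lower bound for $\cos\theta_k$ in terms of $\cos\tilde\theta_k$ that is uniform in $\E^k$ exists. Your fallback does not help either: (\ref{arcBFGS}) sees only $\s^k$ and $\z^k$, so $\d^k$ never enters $\psi$.

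\textbf{What is missing.} You need an ingredient that makes $\theta_k$ and $\tilde\theta_k$ coincide often enough. One route uses the ``positive density'' refinement you mention; note that your contradiction argument alone does not prove it.
\begin{itemize}
\item Every summand is at most $\ln\cos^2\tilde\theta_j\le 0$. Hence the number of $j\le k$ with $\ln\cos^2\tilde\theta_j<-K$ is below $(\psi(\E^0)+c(k+1))/K$.
\item So for each $p\in(0,1)$ there is $\delta(p)>0$ such that, for large $k$, at least $p(k+1)$ indices $j\le k$ satisfy $\cos\tilde\theta_j\ge\delta(p)$.
\item At any index with $\dot{\d}^j=\0$ one has $\s^j=\sin\alpha_j\d^j$, hence $\E^j\s^j=-\sin\alpha_j\nabla f(\x^j)$ and $\tilde\theta_j=\theta_j$.
\item Therefore, if $\{j:\dot{\d}^j=\0\}$ has positive upper density $\rho$, choosing $p>1-\rho$ yields infinitely many $j$ with $\cos\theta_j\ge\delta(p)$.
\end{itemize}
Algorithm~\ref{newAlgo}, however, zeroes $\dot{\d}^{k+1}$ only when $\gamma_k=0$ (Step~\ref{step12}), and that need not happen with positive frequency away from a minimizer. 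Either a safeguard of this kind must be added to the algorithm, or a new argument controlling $\E^k$ along $\dot{\d}^k$ is needed. The paper's bare citation has the same hole. As it stands, neither your proposal nor the cited proof establishes the statement for the arc-search iterates.
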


\begin{remark}
In view of Lemma \ref{conditionM}, the conditions (\ref{repeatM}) of Theorem \ref{nocedal} are always satisfied if $\gamma_k$ is chosen according to~(\ref{gamma}).
\end{remark}

\begin{theorem}
Suppose that $f$ is bounded below in $\R^n$, $f$ is continuously twice differentiable in an open set ${\cal M}$ containing the 
level set ${\cal L}=\{ \x: f(\x) \le f(\x^0) \}$, and Assumptions 1-4 hold. In addition, if $\| \s^k \|=o(1/k^{1+\epsilon})$ for any $\epsilon >0$, then the sequence generated by Algorithm \ref{newAlgo} is globally convergent in the sense that $\lim \inf \|  \nabla f(\x^{k}) \| \rightarrow 0$. Moreover, the sequence generated by Algorithm \ref{newAlgo} converges to some local minimum point $\bar{\x}$ satisfying $\|  \nabla f(\bar{\x}) \|=0$ with superlinear rate.
\end{theorem}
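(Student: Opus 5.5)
The plan has two halves: global convergence via the arc-search Zoutendijk theorem, then superlinear convergence via the Dennis--Moré characterization applied to the robust BFGS direction, once $\gamma_k=0$ eventually.

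\emph{Global convergence.} By Lemma~\ref{succE}, $(\E^k)^{-1}\succ 0$ for all $k$. Lemma~\ref{descent} then shows each $\d^k$ is a descent direction, and Lemma~\ref{arcWolfe} supplies a small $\alpha_k>0$ satisfying (\ref{WolfeC}). If $\dot{\d}^k\neq\0$, we may also shrink $\alpha_k$ so that $\alpha_k\le\|\d^k\|/\|\dot{\d}^k\|$, because Wolfe points exist on an interval near zero by the proof of Lemma~\ref{arcWolfe}. With Assumption~1 and $f$ bounded below, Theorem~\ref{Zoutendijk} gives $\sum_k\cos^2(\theta_k)\|\nabla f(\x^k)\|^2<\infty$. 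Lemma~\ref{conditionM} shows that the choice (\ref{gamma}) enforces (\ref{conditions}), so Theorem~\ref{nocedal} yields a subsequence $\{j_k\}$ with $\cos(\theta_{j_k})\ge\delta>0$. Along this subsequence $\|\nabla f(\x^{j_k})\|^2\le\delta^{-2}\cos^2(\theta_{j_k})\|\nabla f(\x^{j_k})\|^2\to 0$, hence $\liminf\|\nabla f(\x^k)\|=0$.

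\emph{Convergence of the whole sequence.} The hypothesis $\|\s^k\|=o(1/k^{1+\epsilon})$ makes $\sum_k\|\s^k\|$ finite. Therefore $\{\x^k\}$ is Cauchy and converges to some $\bar{\x}$. By continuity of $\nabla f$ and the liminf result, $\nabla f(\bar{\x})=\0$. Since $f(\x^k)$ is monotonically decreasing by (\ref{C1}), the iterates eventually lie in ${\cal N}(\bar{\x})$, and $\bar{\x}$ is a local minimizer by Assumption~2. For large $k$ the segment $[\x^k,\x^{k+1}]$ lies in ${\cal N}(\bar{\x})$, so by Corollary~\ref{corollaryTaylor} we have $\y^k=\bar{\H}\s^k$ with $m\I\preceq\bar{\H}\preceq M\I$. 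This gives $\y^{k\T}\s^k\ge m\|\s^k\|^2$ and $\y^{k\T}\y^k\le M\y^{k\T}\s^k$, which is exactly (\ref{conditionsA}). Step~\ref{step9} of Algorithm~\ref{newAlgo} then sets $\gamma_k=0$, and Step~\ref{step12} sets $\dot{\d}^{k+1}=\0$. So eventually $\z^k=\y^k$, (\ref{E1}) is the classical BFGS update, and the arc step (\ref{updateX}) reduces to a line step $\x^{k+1}=\x^k+\sin(\alpha_k)\d^k$ with effective step length $\sin(\alpha_k)$.

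\emph{Superlinear rate.} From this point the iteration is standard BFGS started from some positive definite matrix. With Assumptions~2--4, in particular $\sum\|\x^k-\bar{\x}\|<\infty$ as in \cite[(8.52)]{nocedal99}, the argument of \cite[Theorem 8.6]{nocedal99} gives the Dennis--Moré condition
\begin{equation*}
\lim_{k\to\infty}\frac{\|(\B^k-\H(\bar{\x}))\d^k\|}{\|\d^k\|}=0 .
\end{equation*}
It remains to show that the effective unit step $\sin(\alpha_k)=1$, i.e.\ $\alpha_k=\pi/2$, is eventually accepted by the Wolfe conditions. For the Dennis--Moré direction, the unit step satisfies (\ref{WolfeC}) for large $k$ whenever $\sigma_1<1/2$ \cite[Theorem 3.6]{nocedal99}, and the arc search with $\dot{\d}^k=\0$ can try $\alpha_k=\pi/2$ first. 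Then \cite[Theorem 3.7]{nocedal99} gives $\|\x^{k+1}-\bar{\x}\|=o(\|\x^k-\bar{\x}\|)$.

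The main obstacle is this final step. Lemma~\ref{arcWolfe} only guarantees \emph{small} Wolfe steps, and Theorem~\ref{Zoutendijk} additionally restricts $\alpha_k$ when $\dot{\d}^k\ne\0$. One has to argue that eventually $\dot{\d}^k=\0$ removes that restriction, and that the step-selection rule tries $\sin(\alpha_k)=1$ first so it is eventually accepted. I would handle this explicitly using the Dennis--Moré estimate together with the Taylor expansion (\ref{Hbar}) near $\bar{\x}$.
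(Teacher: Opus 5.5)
Your outline follows the paper's proof step for step:
\begin{itemize}
\item Lemma~\ref{conditionM} and Theorem~\ref{nocedal} give a subsequence with $\cos(\theta_{j_k})\ge\delta$.
\item Theorem~\ref{Zoutendijk} then gives the $\liminf$.
\item Summability of $\|\s^k\|$ gives $\x^k\to\bar{\x}$.
\item The averaged Hessian near $\bar{\x}$ gives (\ref{conditionsA}), hence $\gamma_k=0$, and the tail is classical BFGS, so \cite[Theorem 8.6]{nocedal99} applies.
\end{itemize}
You go further than the paper only at the end. The paper just declares the iteration to be classical BFGS. You note that this also uses $\dot{\d}^{k+1}=\0$ from Step~\ref{step12}. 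You also note that the superlinear rate needs the full step $\sin(\alpha_k)=1$ to be tried first and eventually accepted, which Algorithm~\ref{newAlgo} never prescribes. That observation is correct, and the paper's proof silently relies on it. However, the way you discharge it fails as written.

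The arc condition (\ref{C1}) has $\sigma_1\alpha_k$ on the right, not $\sigma_1\sin(\alpha_k)$. So at $\alpha_k=\pi/2$ with $\dot{\d}^k=\0$ it reads $f(\x^k+\d^k)\le f(\x^k)+\sigma_1\frac{\pi}{2}\nabla f(\x^k)^{\T}\d^k$. Under the Dennis--Mor\'e condition, $f(\x^k+\d^k)-f(\x^k)=\frac12\nabla f(\x^k)^{\T}\d^k+o(\|\d^k\|^2)$, and $-\nabla f(\x^k)^{\T}\d^k$ is of exact order $\|\d^k\|^2$. Hence the full arc step is eventually accepted only if $\sigma_1<1/\pi$, not $\sigma_1<1/2$. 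For $\sigma_1\in(1/\pi,1/2)$, which the theorem permits, (\ref{C1}) eventually rejects every $\alpha_k$ near $\pi/2$. The accepted $\sin(\alpha_k)$ then stays bounded away from $1$, and superlinear convergence is lost. Your final step therefore needs two added hypotheses: $\sigma_1<1/\pi$, and ``try $\alpha_k=\pi/2$ first.''

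Separately, you propose shrinking $\alpha_k$ to meet $\alpha_k\le\|\d^k\|/\|\dot{\d}^k\|$ in Theorem~\ref{Zoutendijk}; this is not valid. For fixed $\sigma_2<1$, (\ref{C2}) fails for all sufficiently small $\alpha_k>0$, since $\d^{k^{\T}}\nabla f(\x^{k+1})\to\nabla f(\x^k)^{\T}\d^k<\sigma_2\nabla f(\x^k)^{\T}\d^k$. So Wolfe points are bounded away from $0$. The proof of Lemma~\ref{arcWolfe} only obtains (\ref{C2}) by letting $\sigma_2=1-\alpha_k p_3$ depend on $\alpha_k$. The paper's proof ignores this restriction, and your fix does not remove it.
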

\begin{proof}
Let $\s^k$ and $\z^k$ be generated by Algorithm \ref{newAlgo}. It is proved in Lemma~\ref{conditionM} that inequalities of (\ref{conditions}) hold for all $k \ge 0$. It follows from Theorem~\ref{nocedal} that there is a subsequence indices $\{ j_k \}$ such that (\ref{subseq}) holds.
Let $\d^k$ be the search direction generated by Algorithm \ref{newAlgo}. According to Lemma~\ref{descent}, $\d^k$ is a descent direction. In view of Lemma \ref{arcWolfe}, there is an $\alpha_k$ in Algorithm~\ref{newAlgo} that satisfies the Wolfe conditions. Applying these conditions to Theorem~\ref{Zoutendijk}, we have 
\begin{equation}
\sum_{k \in  \{ j_k \}} \cos^2(\theta_k) \| \nabla f(\x^{k})  \|^2 < \infty.
\label{liminf}
\end{equation}
Using the notation of $\lim \inf$ defined in \cite[Page 578]{nocedal99}, 
inequalities (\ref{subseq}) and (\ref{liminf}) imply
\begin{equation}
\lim \inf \| \nabla f(\x^{k})  \| \rightarrow 0.
\label{limit}
\end{equation}
This shows that the sequence generated by Algorithm \ref{newAlgo} is globally convergent.

Denote the $i$-th component of $\s^k$ by  $s_i^k$ and the $i$-th component of $\x^k$ by  $x_i^k$. Since $\| \s^k \|=o(1/k^{1+\epsilon})$, we have $| s_i^k |=o(1/k^{1+\epsilon})$. For the $i$-th component $x_i^{k+1}$ of $\x^{k+1}$, we have $x_i^{k+1}=x_i^0+\sum_{j=0}^{k} s_i^j$. To show that $\{ x_i^{k+1} \}$ is convergent, we just need to show $\lim_{k \rightarrow \infty} \sum_{j=0}^{k} s_i^j$ is convergent. The latter is guaranteed if 
\begin{equation}
\lim_{k \rightarrow \infty} \sum_{j=0}^{k} | s_i^j|
\label{absoluteC}
\end{equation}
is convergent. Since $| s_i^k |=o(1/k^{1+\epsilon})$ holds, it follows that every component $\{ x_i^{k} \}$ of $\{ \x^{k} \}$ is convergent. Therefore, this shows that $\x^{k} \rightarrow \bar{\x}$ and $\| \nabla f(\bar{\x}) \| = 0$. 
In view of the Assumption 2, the function is locally strongly convex in a neighborhood of 
$\bar{\x}$ which satisfies $\| \nabla f(\bar{\x}) \| = 0$, this means that $\bar{\x}$ is an isolated local minimizer in the neighborhood. This shows that the iterates will converge to a local minimizer. Since Algorithm \ref{newAlgo} is globally convergent and the convergent point is a local minimizer, for sufficiently large $k$ and for $\delta$ defined in Assumption 2, we have $f(\x^k) \le f(\bar{\x}) +\delta$. Therefore, for all $\v \in \R^n$ and $k \geq 0$,
\begin{equation}
m \|\v\|^2 \le \v^{\T} \H(\x^k) \v \le M \| \v\|^2.
\label{convexk}
\end{equation}
Using Corollary \ref{corollaryTaylor} 
\begin{equation}
\y^k=\nabla f(\x^{k+1}) -\nabla f(\x^{k})=\int_{0}^{1} \H(\x^k+t\p^k) \s_k\mathrm{d}t \equiv \bar{\H}_k\s_k,
\label{taylor1}
\end{equation}
then Lemma \ref{ykskp0} implies that $\bar{\H}_k$ is positive definite in a small neighborhood of $\bar{\x}$, i.e., for $k$ large enough, for all $\v \in \R^n$,
\begin{equation}
m \|\v\|^2 \le \v^{\T} \bar{\H}_k \v \le M \|\v\|^2.
\label{convexbar}
\end{equation}
This means for $k$ large enough, 
\begin{equation}
\frac{\y_k^{\T}\s_k}{\s_k^{\T}\s_k}=\frac{\s_k^{\T}\bar{\H}_k \s_k}{\s_k^{\T}\s_k}=\frac{\s_k^{\T}\bar{\H}_k \s_k}{\| \s_k \|^2}=\frac{\s_k^{\T}}{\|\s_k\|}\bar{\H}_k \frac{\s_k}{\|\s_k\|} \ge m,
\label{Lbound}
\end{equation}
and
\begin{equation}
\frac{\y_k^{\T}\y_k}{\y_k^{\T}\s_k}=\frac{\s_k^{\T}\bar{\H}_k^2\s_k}{\s_k^{\T}\bar{\H}_k \s_k}
=\frac{(\bar{\H}_k^{\frac{1}{2}}\s_k)^{\T}}{\|\bar{\H}_k ^{\frac{1}{2}}\s_k\|}\bar{\H}_k 
\frac{\bar{\H}_k^{\frac{1}{2}}\s_k}{\|\bar{\H}_k^{\frac{1}{2}}\s_k\|}
 \le M.
\label{Ubound}
\end{equation}
In view of Lemma \ref{conditionM}, (\ref{gamma}), and Algorithm~\ref{newAlgo}, inequalities (\ref{Lbound}) and (\ref{Ubound}) imply that $\gamma_k =0$. Hence, the arc-search BFGS algorithm reduces to the classical BFGS algorithm when $k$ is sufficiently large. Finally, if Assumptions~2 and 4 hold, the classical BFGS converges at a superlinear rate \cite[Theorem 8.6]{nocedal99}. Therefore, the arc-search BFGS also converges at the superlinear rate since it coincides with the classical BFGS method for all sufficiently large $k$. 
\hfill \qed
\end{proof}

\section{Implementation details}\label{implementationDetails}

Algorithm \ref{newAlgo} has been implemented in MATLAB through the function {\tt arcBFGS}. This section provides detailed implementation aspects of the algorithm that were not fully discussed in Section \ref{convergenceAnalysis}, or that differ slightly from the description in Algorithm \ref{newAlgo}. In addition, this section presents numerical results on the standard CUTEst benchmark set problems and compares the proposed algorithm with several established methods.

\vspace{0.15in}
\subsection{Estimation of $m$, $M$, and lower bound of $\gamma_k$}

First, determining $m$ and $M$, which involves the computation of smallest and largest eigenvalues of $\E^k$, is expensive and may vary significantly from one iterate $\x^k$ to another. Moreover, these values may fail to satisfy the condition (\ref{convex}) unless the iterate is sufficiently close to a local minimizer. Nevertheless, $m$ and $M$ play important roles in the computation of $\gamma_k$ through (\ref{gamma0}) and (\ref{gamma2}). The resulting value of $\gamma_k$ directly affects the computations of $\z_k$ via (\ref{zk}), $(\E^{k+1})^{-1}$ via (\ref{E1}) , and ultimately the next search direction $\d^{k+1}$ via (\ref{calMBFGS}). Therefore, obtaining suitable estimates of $m$ and $M$ is crucial for the overall performance of the algorithm. 

The appropriate values of $m$ and $M$ satisfying (\ref{conditions}) depend on the specific function being optimized. To ensure robustness, one may choose a very small $m$ and a very large $M$ so as to cover a broad range of problems. However, such conservative choices may lead to poor numerical performance because inaccurate $m$ and $M$ can produce an unsuitable $\gamma_k$, which in turn might yield a poorly conditioned $(\E^k)^{-1}$ (see (\ref{Ekp1})). Consequently, the resulting  search direction $\d^k$ (see (\ref{dirMBFGS})) may become unreliable. On the other hand, choosing $m$ too large, and/or choosing $M$ too small may violate condition (\ref{convex}) as the iterates approach to a minimizer. 

As a compromise, our default parameter choices are $m=0.00001$ and $M=100000$, which generally produce very satisfactory computational results. However, in some cases, we observed that these fixed choices may lead to slow convergence requiring many iterations. To alleviate this issue, we fix the ratio of $M/m=10^{10}$ while allowing the middle point of between $m$ and $M$ to vary adaptively. By shifting the interval determined by $m$ and $M$, we aim to enlarge the feasible interval of $\gamma_k$ given in (\ref{gamma}), thereby providing greater flexibility in the selection of $\gamma_{k}$. In particular, this strategy increases the likelihood that $\gamma_k=0$ can be selected when the iterates are close to a minimizer, which is important for achieving a local superlinear convergence rate.

In view of (\ref{gamma}), we denote
\begin{equation}
	 \gamma_l = \begin{cases}
	  \max \{ 0, \check{\gamma}_k, \underline{\gamma}_k \}  & 	\text{if }  \s_k^{\T}\s_k > \y_k^{\T}\s_k , \\
	 0  & \text{if }  \s_k^{\T}\s_k = \y_k^{\T}\s_k , \\
	 \max \{ 0, \underline{\gamma}_k \} & \text{if }  \s_k^{\T}\s_k < \y_k^{\T}\s_k, 
	 \end{cases}
\label{lowerB}
\end{equation}
and $\gamma_u = 1$ the lower and upper bounds of $\gamma_k \in [\gamma_l, \gamma_u]$ respectively. Since the upper bound is fixed with $\gamma_u = 1$, our goal is to adjust the center between $m$ and $M$ to reduce the lower bound $\gamma_l$.

It follows from (\ref{gamma0}) and (\ref{gamma2}) that $m$ only affects the value of $\check{\gamma}_k$, and $M$ only affects the value of $\underline{\gamma}_k$. Carefully examining the proof of Lemma~\ref{conditionM} indicates (a) if $\s^{k^{\T}}\s^k > \y^{k^{\T}}\s^k$, then $\check{\gamma}_k < 1$, and $\check{\gamma}_k$ decreases as $m \in (0,1)$ decrease; (b) if $\s^{k^{\T}}\s^k < \y^{k^{\T}}\s^k$, in view of (\ref{lowerB}), the lower bound of $\gamma_l$ depends only on $\underline{\gamma}_k$; and (c) $\underline{\gamma}_k <1$, and $\underline{\gamma}_k$ decreases as $M \in (1,\infty)$ increases. We would like to move the center between $m$ and $M$ to decrease $\gamma_{l}$ toward zero to maximize the range of $\gamma_k$. Therefore, in view of (\ref{lowerB}), we consider three cases: (i) if $\s^{k^{\T}}\s^k < \y^{k^{\T}}\s^k$ and $\underline{\gamma}_k >0$, according to (\ref{gamma0}), we need to increase $M$ (and $m$ to keep the ration of $M$ to $m$) to decrease $\underline{\gamma}_k$ (in this way, when the iterate is close to a minimizer, a conventional BFGS may be selected); (ii) if $\s^{k^{\T}}\s^k > \y^{k^{\T}}\s^k$ and $\check{\gamma}_k > \underline{\gamma}_k$, we need to decrease $m \in [0,1)$ (and $M$ to keep the ratio) to decrease $\check{\gamma}_k$ (and increase $\underline{\gamma}_k$) so that $\max \{ 0,\check{\gamma}_k,\underline{\gamma}_k \}$ is reduced; and (iii) if $\s^{k^{\T}}\s^k > \y^{k^{\T}}\s^k$ and $\check{\gamma}_k < \underline{\gamma}_k$, we need to increase $M$ (and $m$ to keep the ration of $M$ to $m$) to decrease $\underline{\gamma}_k$ (and increase $\check{\gamma}_k$) so that $\max \{ 0,\check{\gamma}_k,\underline{\gamma}_k \}$ is reduced.

The above simple heuristics to dynamically adjust $m$ and $M$ are summarized as the following algorithm.

\begin{algorithm} {Selection of $m$ and $M$ } \\
\begin{algorithmic}[1] 
\STATE Set $m=\bar{m}=10^{-5}$, $M=\bar{M}=10^5$, and calculate $\check{\gamma}_k$ and $\underline{\gamma}_k$.
\IF{$\s^{k^{\T}}\s^k < \y^{k^{\T}}\s^k$ and $\underline{\gamma}_k >0$}
	\FOR{$i=1:4$}
		\STATE   $M=10^i \bar{M}$ and $m=10^i \bar{m}$
		\STATE   Calculate $\underline{\gamma}_k$ and set $\gamma_l = \max \{ 0,  \underline{\gamma}_k$ \}
		\IF{$\underline{\gamma}_k < 0$}
			\STATE Break `for loop'
		\ENDIF
	\ENDFOR
\ELSIF{$\s^{k^{\T}}\s^k > \y^{k^{\T}}\s^k$ and $\check{\gamma}_k>\underline{\gamma}_k$}
	\FOR{$i=1:4$}
		\STATE   $M=10^{-i} \bar{M}$ and $m=10^{-i} \bar{m}$
		\STATE   calculate $\underline{\gamma}_k$ and $\check{\gamma}_k$
		\IF{$\check{\gamma}_k>\underline{\gamma}_k$}
			\STATE $\gamma_l =  \max \{ 0, \check{\gamma}_k$ \}
		\ELSE
			\STATE $\gamma_l =   \max \{ 0,  \underline{\gamma}_k \}$ and break `for loop'
		\ENDIF
	\ENDFOR
\ELSIF{$\s^{k^{\T}}\s^k > \y^{k^{\T}}\s^k$ and $\check{\gamma}_k<\underline{\gamma}_k$}
		\FOR{$i=1:4$}
		\STATE   $M=10^i \bar{M}$ and $m=10^i \bar{m}$
		\STATE   calculate $\underline{\gamma}_k$  and $\check{\gamma}_k$
		\IF{$\check{\gamma}_k<\underline{\gamma}_k$}
			\STATE $\gamma_l =  \max \{ 0, \underline{\gamma}_k \}$
		\ELSE
			\STATE $\gamma_l =   \max \{ 0, \check{\gamma}_k \}$ and break `for loop'
		\ENDIF
	\ENDFOR
\ELSIF{$\s^{k^{\T}}\s^k > \y^{k^{\T}}\s^k$ and $\check{\gamma}_k=\underline{\gamma}_k$}
	\STATE $\gamma_l =  \max \{ 0,  \check{\gamma}_k \}$
\ELSIF{$\s^{k^{\T}}\s^k = \y^{k^{\T}}\s^k$}
	\STATE $\gamma_l = 0$
\ENDIF
\end{algorithmic}
\label{mMAlg}
\end{algorithm}

This algorithm is applied between Lines~\ref{step8} and \ref{step9} of Algorithm \ref{newAlgo}, and is executed before the calculation of (\ref{gamma}). This strategy significantly reduces the number of iterations for the problems which had slow convergence when fixed $m$ and $M$ were used. Moreover, it has little impact on the remaining problems.

\subsection{Select of $\gamma_k$}

For some test problems, selecting the smallest admissible value $\gamma_k=\gamma_l$ during the early iterations may lead to a poorly conditioned estimate of $\E^k$ (see (\ref{Ekp1})). As a consequence, the resulting search direction $\d^k$ can become excessively large, while the step size $\alpha_k$ becomes very small. This situation has two undesirable effects: (a) the search direction $\d^k$ may not be accurate, and (b) the estimate of $\dot{\d}^k$ may become unreliable due to a large numerical error. In such cases, we prefer to maintain a well-conditioned matrix $\E^k$ so that a reliable search direction $\d^k$ can be obtained.  

Intuitively, a large value of $\| \nabla_{\x} f(\x^k) \|$ is an indicator that the current iterate is far way from a minimizer. Let $g_k=\| \nabla_{\x} f(\x^k) \|$ denote the norm of the gradient at current iterate, and $g_{max}=\max_{k} \| \nabla_{\x} f(\x^k) \|$ denote the maximum of $ \| \nabla_{\x} f(\x^k) \|$ up to current iterate. The following formula is used to select $\gamma_k \in [\gamma_l,\gamma_u]$.
\begin{equation}
	\gamma_k = \begin{cases}
		\gamma_{l} & \text{ if } \frac{g_k}{g_{max}} \leq 10^{-2} \text{ or } g_k \leq 100 \\
		\gamma_{l} + \frac{g_k}{g_{max}}(\gamma_u - \gamma_{l}) & \text{ if } 10^{-2} < \frac{g_k}{g_{max}} < 1 \\
		1  & \text{ if } 1 \leq \frac{g_k}{g_{max}} .
		\end{cases}
\label{gamma_adj}
\end{equation}

\subsection{Handeling incorrect $\dot{\d}^k$}

As we mentioned in the previous section, the estimation of $\dot{\d}^k$ using~(\ref{ddd}) or~({\ref{ddd1}) can be inaccurate if $\d^k$ is excessively large due to numerical errors while $\alpha_k$ is very small. In addition, if $\gamma_k=0$, according to Algorithm~\ref{newAlgo} step~\ref{step12}, we must set $\dot{\d}^k=\0$ to guarantee superlinear convergence rate. Therefore, $\dot{\d}$ is adjusted from the value computed by (\ref{ddd}) or (\ref{ddd1}) according to the following rule.
\begin{equation}
	\dot{\d}^k = \begin{cases}
		\0 & \text{ if } \gamma_k =0, \\
		0.2 \frac{ \| \d^k \| }{ \| \dot{\d}^k \| } \dot{\d}^k & \text{ if } \| \dot{\d}^k \| >10 \| \d^k \|,  \\
		\dot{\d}^k &   \text{ else.} 
		\end{cases}
\label{dotdk_adj}
\end{equation}
Although combining (\ref{ddd}) with (\ref{dotdk_adj}) (or combining (\ref{ddd1}) with (\ref{dotdk_adj})) significantly improves the performance of the algorithm, we believe that developing a more accurate approach for estimating $\dot{\d}^k$ remains an important topic for future research.

\section{Numerical test}\label{numericalTest}

The arc-search BFGS implementation {\tt arcBFGS} and
the BFGS algorithm implemented in Matlab Optimization 
Toolbox function {\tt fminunc} are tested using the 
{CUTEst} test problem set. {\tt fminunc} options are set as
\footnotesize
\[
\mbox{options = optimset('LargeScale','off','MaxFunEvals',1e+20,'MaxIter',
5e+5,'TolFun',1e-20, 'TolX',1e-10).}
\]
\normalsize
This setting is selected to ensure that the BFGS implementation in Matlab function {\tt fminunc} has enough iterations either to converge or to fail.  

We conducted numerical experiments for both {\tt arcBFGS} and {\tt fminunc} using the {\tt CUTEst} test problem set, downloaded from the Princeton test problem collection \cite{web}. Since the {\tt CUTEst} problems are provided in AMPL files, we first converted the AMPL files into nl-files so that the Matlab functions could read the {\tt CUTEst} models. The Matlab functions {\tt arcBFGS} and {\tt fminunc} were then used to read the nl-files and solve the corresponding test problems.

All function values and gradient are calculated from AMPL command $\mbox{[fxk1,gk1] = matampl(xk1)}$. Both {\tt arcBFGS} and {\tt fminunc} use these values within their optimization procedures. The tests employed the initial points provided by the {\tt CUTEst} test set. For each problem, we recorded the objective function values, the norms of the gradients at the final iterates, and the corresponding iteration counts. The numerical results are reported in Table~1. In the table, {\tt iter} denotes the total number of iterations performed by the algorithm, {\tt obj} denotes the objective function value at termination, and {\tt gradient} denotes the norm of the gradient at the final iterate. 

Because different algorithms are implemented in different programming languages, it is generally difficult to make completely fair comparisons based on computational time. In particular, Matlab is an interpreted language and is typically slower than compiled languages such as C/C++ or FORTRAN. Fortunately, the analysis in Remark~\ref{efficiency} shows that the computational cost per iteration of the arc-search BFGS method is nearly the same as that of the conventional BFGS method. Therefore, comparing iteration counts between the algorithms provides a meaningful measure of performance.

Table~1 includes the test results obtained by the classical BFGS algorithm implemention {\tt fminunc} in the Matlab optimization toolbox and the arc-search BFGS implementation {\tt arcBFGS}. Note that {\tt fminunc} is implemented as compiled code, whereas {\tt arcBFGS} is implemented as interpreted Matlab code. The stopping criterion for {\tt arcBFGS} was set to $\epsilon = 10^{-5}$.

\footnotesize
\begin{center}
\begin{longtable}{|c|r|r|r|r|r|r|r|}
\caption{Test result for problems in CUTEst \cite{web}, initial points are given in CUTEst} \\
\hline    
Problem & size & iter   &  obj    & gradient  & iter    & obj     & gradient  \\
        & n    & arcBFGS  &  arcBFGS  & arcBFGS     & fminunc & fminunc & fminunc    \\
\hline
arglina    & 100  &  1    & 100.0       & 1.316e-13  & 4  & 100.0      &  5.6241e-05       \\ \hline
bard       & 3  &  16   & 0.8214e-02       & 9.2845e-06  & 20 & 0.8214e-02      &  0.1158e-05     \\ \hline
beale      & 2  &  11   & 4.2555e-13    & 2.7682e-06 & 15 & 0.2400e-09   &  0.1392e-06     \\ \hline
biggs6     & 6  &  71   & 4.8821e-10    & 5.8848e-06  & 68 & 0.43162e-03      &  {\bf 0.28136e-01} \\ \hline
box3       & 3  &  16   & 1.8263e-11    & 1.5785e-06   & 24 & 0.3880e-10   &  0.2364e-05     \\ \hline
brkmcc     & 2  &  5    & 0.1690e-00    & 2.6556e-08  & 5  & 0.1690e-00      &  0.4542e-06     \\ \hline
brownal    & 10  &  6   & 9.4808e-14   & 6.5791e-07 & 16 & 0.3050e-08   &  {\bf 0.1043e-03}      \\ \hline
brownbs    & 2  &  38  & 1.2622e-29    & 7.1054e-09  & 11 & 0.9308e-04      &  {\bf 15798.5950e-00} \\ \hline
brownden   & 4  &  22  & 85822.2016e-00    &1.4567e-05  & 32 & 85822.2017e-00 &  {\bf 0.4646e-00} \\ \hline
chnrosnb   & 50  &  160  & 4.833e-14    & 5.5823e-06  & 96 & 30.2671e-00      &  {\bf 13.4381e-00} \\ \hline
cliff      & 2  &  5   & 0.19979e-00   & 5.2945e-06  & 1  & 1.0015e-00      &  {\bf 1.4147e-00} \\ \hline
cube       & 2  &  26   & 2.7048e-14    & 6.9257e-06 & 34 & 0.7987e-09   &  {\bf 0.1340e-03}       \\ \hline
deconvu    & 51  &  56   & 3.0599e-09    & 3.2062e-06   & 80 & 0.3158e-06   & {\bf  0.1750e-03}     \\ \hline
denschna   & 2  & 6    & 1.4673e-14    & 3.3458e-07  & 10 & 0.5000e-12   &  0.1581e-05     \\ \hline
denschnb   & 2  &  6    & 1.6576e-14   & 2.7736e-07  & 7  & 0.1000e-11   &  0.2200e-05     \\ \hline
denschnc   & 2  & 14    & 4.833e-12 & 9.6253e-06  & 21 & 0.1608e-08   &  {\bf 0.3262e-03}     \\ \hline
denschnd   & 3  &  76   & 3.4594e-09  & 3.2575e-06  & 23 & 45.2971e-00      &  {\bf 84.5851e-00} \\ \hline
denschnf   & 2  &  8   & 3.4868e-14   & 3.4354e-06  & 10 & 0.2000e-10   &  {\bf 0.1005028e-03}     \\ \hline
dixon3dq   & 10  &  14  & 5.0073e-14    & 5.2572e-07  & 20 & 0.1400e-11   &  0.3661e-05     \\ \hline
djtl       & 2  &  86  & -8951.5447e-00       & 0.0060653  & 3  & -8033.8869e-00      &  {\bf 1273.3319e-00} \\ \hline
eigenals   & 110  &  93   & 1.2617e-14    & 2.5416e-06  & 81 & 0.11393e-02   &  {\bf 0.092485e-00} \\ \hline
eigenbls   & 110  &  488  & 1.7596e-12    & 7.4625e-06  & 89 & 0.3676e-00      &  {\bf 0.75251e-00} \\ \hline
engval2    & 3  &  31   & 2.6758e-17    & 2.6573e-07  & 29 & 0.3953e-09   &    {\bf 0.2799e-03}     \\ \hline
errinros   & 81  &  46   & 2.4798e-07  & 6.6802e-06  & 92 & 0.4577e-03      &  {\bf 0.2553e-00} \\ \hline
expfit     & 2  &  11   & 0.2405e-00       & 2.402e-07 & 12 & 0.2405e-00      &  0.2263e-05     \\ \hline
extrosnb   & 10  &  1    & 0.0       & 0.0     & 1  & 0.0      &  0.0       \\ \hline
fletcbv2   & 100  &  96   & -0.5140e-00       & 8.0932e-06  & 98 & -0.5140e-00      &  {\bf 0.1087e-04}     \\ \hline
fletchcr   & 100  &  206  & 7.0761e-14  & 8.1494e-06  & 76 & 0.2699e-00       &  {\bf 3.2669e-00} \\ \hline
genhumps   & 5  &  65   & 1.9195e-10    &6.6494e-06  & 61 & 0.2239e-07   & {\bf 0.8180e-04}     \\ \hline
growthls   & 3  &  1    & {\bf 3542.1490e-00} & 0      & 12 & 12.4523e-00      &  {\bf 0.5809e-01} \\ \hline
hairy      & 2  &  52   & 20.0       & 4.0245e-06 & 22 & 20.0    &  {\bf  0.8435e-04}     \\ \hline
hatfldd    & 3  &  25   & 6.6151e-08   & 1.3139e-07  & 19 & 0.6615e-07   &  0.2355e-05     \\ \hline
hatflde    & 3  &  32   & 4.4345e-07    & 1.4845e-06  & 9  & 0.6210e-06   &  0.7970e-05     \\ \hline
heart6ls   & 6  & 848  & 3.537e-20  & 1.863e-06 & 53 & 0.6370e-00      &  {\bf 74.8787e-00} \\ \hline
helix      & 3  &  28 & 1.0859e-13 & 6.339e-06  & 29 & 0.2260e-10   &  {\bf 0.4196e-04}     \\ \hline
hilberta   & 10  &  28   & 2.3253e-07    & 5.7438e-06  & 35 & 0.2289e-06   &  0.3263e-05     \\ \hline
hilbertb   & 50  &  8   & 1.2136e-12    & 4.9428e-06  & 6  & 0.21e-11   &  0.6542e-5     \\ \hline
himmelbb   & 2  &7    & 3.4657e-09    & 5.6347e-06  & 6  & 0.1462e-04      &  {\bf 0.1251e-02}       \\ \hline
himmelbf   & 2  &  6    & 6.8141e-12   & 8.4837e-06 & 8  & 0.1000e-12   &  0.1448e-05     \\ \hline
himmelbg   & 2  &  6    & 6.8141e-12    & 8.4837e-06  & 8  & 0.1000e-12   &  0.1448e-05     \\ \hline
himmelbh   & 2  &  5   & -1.000e-00   & 5.8436e-07 & 7  & -0.9999e-00  &  0.2607e-06     \\ \hline
humps      & 2  &  67   & 2.063e-10   & 6.4277e-06  & 25 & 5.4248e-00      &  {\bf 2.3625e-00} \\ \hline
jensmp     & 2  &  1    & {\bf 2020}       & 0             & 16 & 124.3621e-00      & {\bf 0.2897e-03}    \\ \hline
kowosb     & 4  &  25   & 0.3075e-03    & 6.5807e-07  & 33 & 0.3075e-03   &  0.1253e-06     \\ \hline
loghairy   & 2  &  78   & 0.18232e-00       & 3.7674e-06e-06  & 11 & 2.5199e-00 &  {\bf 0.8033e-00}        \\ \hline
mancino    & 100  &  122   & 5.3267e-20    & 6.5609e-07  & 9  & 0.2204e-02      &  {\bf 1.2243e-00} \\ \hline
maratosb   & 2  &  2    & -1.0       &  5.2357e-09  & 2  & -0.9997e-00      &  {\bf 0.4976e-01} \\ \hline
mexhat     & 2  &  1    & -0.401e-01       & 6.7066e-06  & 4  & -0.4009e-01      &  {\bf 0.13703e-04}    \\ \hline
osborneb   & 11  &  49   & 0.040138       & 4.8859e-06  & 76 & 0.4013e-01      &  0.7884e-05     \\ \hline
palmer1c   & 8  &  33  &0.097598e-00     & 1.2491e-05  & 38 & 16139.4418e-00      &  {\bf 655.0159e-00} \\ \hline
palmer2c   & 8  &  38   & 0.14421e-01       & 5.4894e-06  & 60 & 98.0867e-00      &  {\bf 33.4524e-00} \\ \hline
palmer3c   & 8  &  38   & 0.19538e-01       & 4.2066e-08  & 56 & 54.3139e-00      &  {\bf 7.8518e-00} \\ \hline
palmer4c   & 8  &  36   & 0.50311e-01       &2.8175e-06  & 56 & 62.2623e-00      &  {\bf 6.6799e-00} \\ \hline
palmer5c   & 6  &  11   & 2.1281e-00    & 9.8678e-07  & 14 & 2.1280e-00      &  {\bf 0.7484e-03}       \\ \hline
palmer6c   & 8  &  60   & 0.16387e-01    & 8.0068e-06  & 43 & 18.0992e-00      &  {\bf 0.7851e-00} \\ \hline
palmer7c   & 8  &  44   & 0.60199e-00   & 1.3956e-06  & 28 & 56.9098e-00      &  {\bf 4.0268e-00} \\ \hline
palmer8c   & 8  &  50   & 0.15977e-00   & 2.2619e-06 & 49 & 22.4366e-00      &  {\bf 1.2983e-00} \\ \hline
rosenbr    & 2  &  23   & 1.3706e-14    & 3.6743e-06  & 36 & 2.8336e-11   &  2.6095e-5     \\ \hline
sineval    & 2  &  65   & 2.7999e-17    & 4.5792e-07 & 47 & 0.2212e-00      &  {\bf 1.2315e-00} \\ \hline
sisser     & 2  &  16  & 1.1792e-08   & 5.0998e-06  & 11 & 1.5409e-08   &  7.2827e-06     \\ \hline
tointqor   & 50  &  35   & 1175.4722e-00   & 9.067e-06  & 40 & 1175.4722e-00 &  8.6856e-05     \\ \hline
vardim     & 100  &  21   & 3.0876e-18    & 1.1093e-06  & 1  & 9.2547e-10  &  {\bf 0.03539e-00} \\ \hline
watson     & 31  &  46   & 2.4798e-07    & 6.6802e-06  & 90 &  0.1050e-02      &  {\bf 0.4875e-00} \\ \hline
yfitu      & 3  &  63   & 6.6697e-13    & 6.0455e-07  & 56 & 0.4371e-02      &  {\bf 11.7373e-00} \\ \hline
\end{longtable}
\label{BFGSvsmBFGS}
\end{center}
\normalsize

We summarize the comparison of the test result as follows:
\begin{itemize}
\item[1.] The arc-search BFGS function {\tt arcBFGS} converges, satisfying the termination condition $|g(x_k)| < 10^{-5}$, for all $64$ test problems except {\tt brownden} and {\tt djtl}. In contrast, the Matlab Optimization Toolbox BFGS function {\tt fminunc} converges for only $22$ out of the $64$ problems. For the problems {\tt brownden} and {\tt djtl}, the solutions obtained by {\tt arcBFGS} are still very close to the optimal solutions, whereas the solutions produced by {\tt fminunc} remain far from optimality. In fact, for $28$ out of the $64$ problems, the solutions obtained by {\tt fminunc} are far from the optimal solutions.
\item[2.] For the problems on which both {\tt arcBFGS} and {\tt fminunc} converge successfully, {\tt arcBFGS} generally requires fewer iterations than {\tt fminunc}. The only exceptions observed in our experiments are the problems {\tt hatfldd}, {\tt hatflde}, {\tt hilbertb}, and {\tt sisser}.
\item[3.] For two problems, {\tt growthls} and {\tt jensmp}, {\tt arcBFGS} converges to a local minimum, whereas {\tt fminunc} finds better local solutions.
\end{itemize}

An alternative way to demonstrate the superiority of the proposed algorithm is through the use of {\it performance profiles}, which were first introduced in \cite{ty96} and further analyzed in \cite{dm02}. Let ${\cal S}$ denote the set of solvers and ${\cal P}$ the set of test problems. Let $n_s$ and $n_p$ represent the number of solvers and test problems, respectively, and let $m_{p,s}$ denote the merit value (for example, the number of iterations) obtained by solver $s$ on problem $p$. The performance ratio is defined as \cite{ty96}
\begin{equation}
r_{p,s}= \frac{m_{p,s}}{\min \{ m_{p,s}: s \in {\cal S}  \}}.
\label{performRatio}
\end{equation}
The performance profiles for {\tt arcBFGS} and Matlab's {\tt fminunc} are presented in Figure~\ref{fig:profile1}. The results clearly demonstrate that the arc-search BFGS implementation {\tt arcBFGS} computes optimal solutions more efficiently than the classical BFGS implementation {\tt fminunc} for the tested problems.

\begin{figure}[ht]
\centerline{\epsfig{file=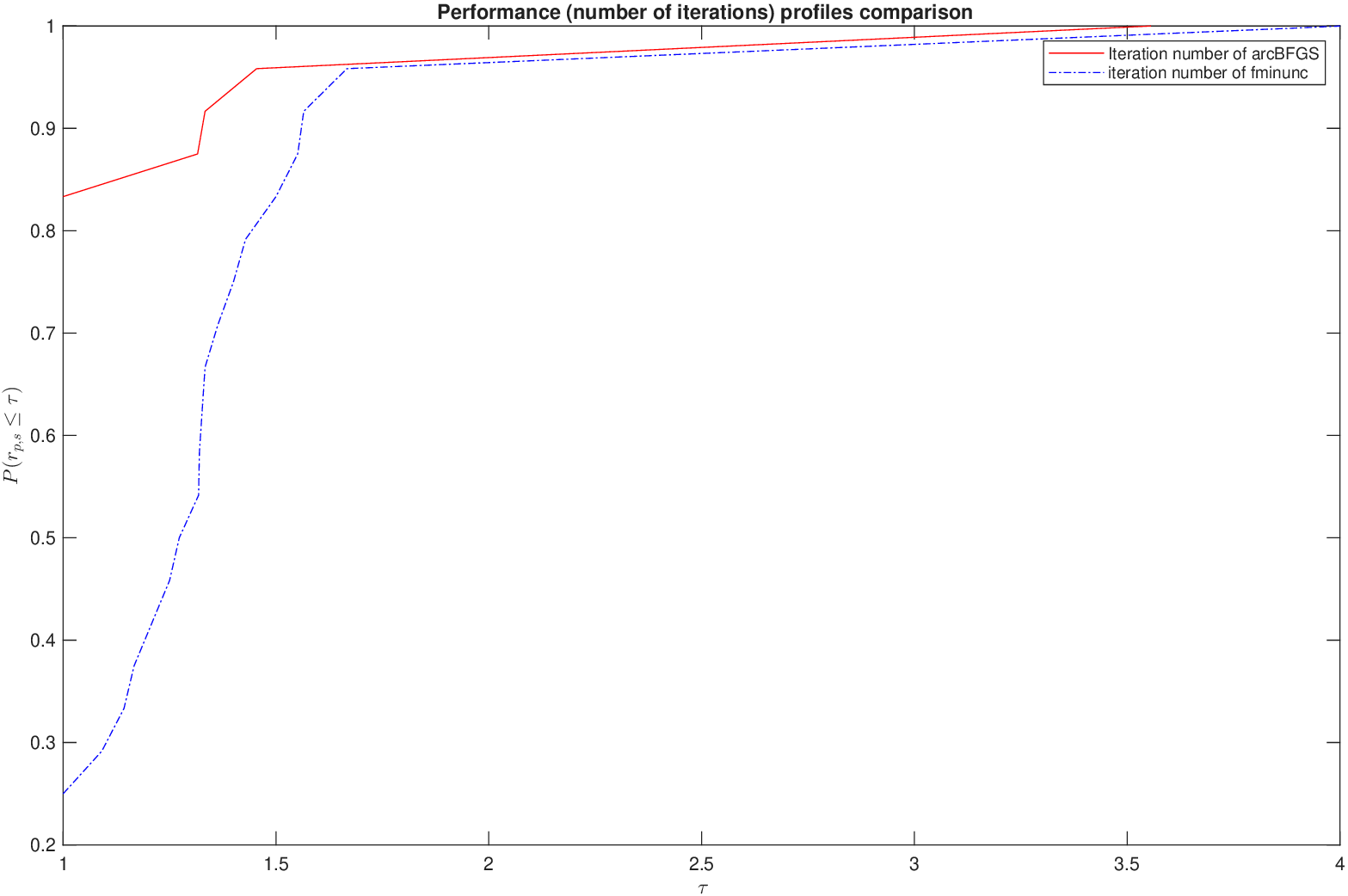,height=6cm,width=10cm}}
\caption{Performance profiles comparison for {\tt arcBFGS} and {\tt fminunc}.}
\label{fig:profile1}
\end{figure} 

Most of the above problems are also used by other 
researchers \cite{hager} to test some established and 
state-of-the-art algorithms. In \cite{hager}, 
CUTEst unconstrained problems are tested against 
limited memory BFGS algorithm \cite{nocedal80} 
(implemented as {\tt L-BFGS}), a descent and conjugate 
gradient algorithm \cite{hz05} (implemented as 
{\tt CG-Descent 5.3}), and a limited memory descent 
and conjugate gradient algorithm \cite{hz12} 
(implemented as {\tt L-CG-Descent}). 
We compare the test results obtained by our implementation 
of Algorithm \ref{newAlgo}, the robust BFGS \cite{yang24}, and the results obtained by 
algorithms \cite{hz12,hz05,nocedal80} (reported in 
\cite{hager}). For this test, we changed the stopping 
criterion to $\| g(x) \|_{\infty} \le 10^{-6}$
which is used in \cite{hager} for comparison between the 
results obtained in \cite{hz12,hz05,nocedal80}. The test 
results are listed in Table 2.

\footnotesize
\begin{center}
\begin{longtable}{|r|r|r|r|r|r|}
\caption{Comparison of arcBFGS, L-CG-Descent, L-BFGS, and CG-Descent 5.3 for 
problems in CUTEst \cite{web}, initial points are given in CUTEst}\\
\hline    
Problem      & size   & methods          & iter    &  obj    & gradient      \\
\hline
arglina      &  200   & mBFGS            &   1    &  {\bf 1.000e+002} &   1.894e-015  \\ 
             &        & arcBFGS          &   1    &  {\bf 1.000e+002} &   5.071e-014  \\
             &        & L-CG-Descent     &   1    &  2.000e+002       &   3.384e-008  \\
             &        & L-BFGS           &   1    &  2.000e+002       &   3.384e-008  \\
             &        & CG-Descent 5.3   &   1    &  2.000e+002       &   2.390e-007  \\
\hline
bard         &  3     & mBFGS            &   18   &   1.157e-001      &    9.765e-007           \\ 
             &        & arcBFGS          &   16   &   {\bf 8.215e-003}      &    7.067e-006           \\
             &        & L-CG-Descent     &   16   &   {\bf 8.215e-003}      &    3.673e-009           \\
             &        & L-BFGS           &   16   &   {\bf 8.215e-003}      &    3.673e-009           \\
             &        & CG-Descent 5.3   &   21   &   {\bf 8.215e-003}      &    1.912e-007           \\
\hline
beale        &  2     & mBFGS            &   13   &   4.957e-020      &    2.979e-010           \\ 
             &        & arcBFGS          &   11   &   2.665e-015      &    2.215e-007           \\
             &        & L-CG-Descent     &   15   &   2.727e-015      &    4.499e-008           \\
             &        & L-BFGS           &   15   &   2.727e-015      &    4.499e-008           \\
             &        & CG-Descent 5.3   &   18   &   1.497e-007      &    4.297e-007           \\
\hline
biggs6       &  6     & mBFGS            &   73   &   {\bf 7.777e-013}  &    4.920e-007         \\ 
             &        & arcBFGS          &   71   &   {\bf 3.407e-009}  &    9.581e-006         \\
             &        & L-CG-Descent     &   27   &   5.656e-003        &    2.514e-008         \\
             &        & L-BFGS           &   27   &   5.656e-003        &    2.514e-008         \\
             &        & CG-Descent 5.3   &   85   &   5.656e-003        &    9.195e-007         \\
\hline
box3         &  3     & mBFGS            &   21   &   1.692e-016        &    4.450e-008         \\ 
             &        & arcBFGS          &   16   &   2.765e-013        &    7.666e-007         \\
	         &        & L-CG-Descent     &   11   &   3.819e-013        &    7.584e-007         \\
	         &        & L-BFGS           &   11   &   3.819e-013        &    7.584e-007         \\
	         &        & CG-Descent 5.3   &   13   &   1.707e-010        &    6.003e-007         \\
\hline
brkmcc       &  2     & mBFGS            &   4    &    1.690e-001     &    8.034e-008           \\ 
             &        & arcBFGS          &   5    &    1.690e-001     &    5.049e-006           \\
             &        & L-CG-Descent     &   5    &    1.690e-001     &    6.220e-008           \\
             &        & L-BFGS           &   5    &    1.690e-001     &    6.220e-008           \\
             &        & CG-Descent 5.3   &   4    &    1.690e-001     &    5.272e-008           \\
\hline
brownal      &  200   & mBFGS            &   7     &    1.496e-016     &    8.518e-009           \\
             &        & arcBFGS          &   6     &    1.909e-012     &    3,648e-009           \\
             &        & L-CG-Descent     &   9     &    2.704e-018     &    4.540e-008           \\
             &        & L-BFGS           &   4     &    1.473e-009     &    6.663e-007           \\
             &        & CG-Descent 5.3   &   12    &    6.562e-011     &    1.392e-007           \\
\hline
brownbs      &  2     & mBFGS            &   632   &   1.952e-018      &   5.369e-007     \\ 
             &        & arcBFGS          &  38  &   3.008e-016      &   8.382e-006     \\
             &        & L-CG-Descent     &   13    &   0.000e+000      &   0.000e+000     \\
             &        & L-BFGS           &   13    &   0.000e+000      &   0.000e+000     \\
             &        & CG-Descent 5.3   &   16    &   1.972e-031      &   8.882e-010     \\
\hline
brownden     &  4     & mBFGS            &   21    &   8.582e+004      &   3.092e-010            \\ 
             &        & arcBFGS          &   22    &   8.582e+004      &   4.173e-006            \\
             &        & L-CG-Descent     &   16    &   8.582e+004      &   1.282e-007            \\
             &        & L-BFGS           &   16    &   8.582e+004      &   1.282e-007            \\
             &        & CG-Descent 5.3   &   38    &   8.582e+004      &   9.083e-007            \\
\hline
chnrosnb     &  50    & mBFGS            &   160   &  1.263e-015       &  3.525e-007             \\ 
             &        & arcBFGS          &   160   &  4.418e-014       &  7.632e-006            \\
             &        & L-CG-Descent     &   287   &  6.818e-014       &  5.414e-007             \\
             &        & L-BFGS           &   216   &  1.582e-013       &  5.565e-007             \\
             &        & CG-Descent 5.3   &   287   &  6.818e-014       &  5.414e-007             \\
\hline
cliff        &  2     & mBFGS            &   15    &   1.998e-001      &   7.602e-008            \\ 
             &        & arcBFGS          &   5    &   1.998e-001      &   2.518e-006            \\
             &        & L-CG-Descent     &   18    &   1.998e-001      &   2.316e-009            \\
             &        & L-BFGS           &   18    &   1.998e-001      &   2.316e-009            \\
             &        & CG-Descent 5.3   &   19    &   1.998e-001      &   6.352e-008            \\
\hline
cube         &  2     & mBFGS            &   21    &   4.231e-020      &   6.845e-010            \\ 
             &        & arcBFGS          &   26    &   1.721e-015      &   2.344e-006            \\
             &        & L-CG-Descent     &   32    &   1.269e-017      &   1.225e-009            \\
             &        & L-BFGS           &   32    &   1.269e-017      &   1.225e-009            \\
             &        & CG-Descent 5.3   &   33    &   6.059e-015      &   4.697e-008            \\
\hline
deconvu      &  61    & mBFGS            &   67    &   1.567e-009      &   9.999e-007            \\ 
             &        & arcBFGS          &   56   &   1.923e-007      &   8.780e-006            \\
             &        & L-CG-Descent     &   475   &   1.189e-008      &   9.187e-007            \\
             &        & L-BFGS           &   208   &   2.171e-010      &   8.924e-007            \\
             &        & CG-Descent 5.3   &   475   &   1.184e-008      &   9.078e-007            \\
\hline
denschna     &  2     & mBFGS            &   7    &   1.468e-014      &   3.198e-007            \\ 
             &        & arcBFGS          &   6    &   7.030e-012      &   6.598e-006            \\
             &        & L-CG-Descent     &   9    &   3.167e-016      &   3.527e-008            \\
             &        & L-BFGS           &   9    &   3.167e-016      &   3.527e-008            \\
             &        & CG-Descent 5.3   &   9    &   7.355e-016      &   4.825e-008            \\
\hline
denschnb     &  2     & mBFGS            &   7    &   6.048e-014      &   4.252e-007            \\ 
             &        & arcBFGS          &   6    &   2.675e-014      &   6.598e-006            \\
             &        & L-CG-Descent     &   7    &   3.641e-017      &   1.034e-008            \\
             &        & L-BFGS           &   7    &   3.641e-017      &   1.034e-008            \\
             &        & CG-Descent 5.3   &   8    &   4.702e-014      &   4.131e-007            \\
\hline
denschnc     &  2     & mBFGS            &   8    &  {\bf 1.119e-021} &   1.731e-010            \\ 
             &        & arcBFGS          &   14   &  {\bf 1.774e-013} &   6.791e-007            \\
             &        & L-CG-Descent     &   12   &  {\bf 3.253e-019} &   3.276e-009            \\
             &        & L-BFGS           &   12   &  {\bf 3.253e-019} &   3.276e-009            \\
             &        & CG-Descent 5.3   &   12   &  1.834e-001       &   4.143e-007            \\
\hline
denschnd     &  3     & mBFGS            &   38   &  2.461e-009       &   3.146e-007            \\ 
             &        & arcBFGS          &   76   &  4.967e-008       &   6.323e-006            \\
             &        & L-CG-Descent     &   47   &  4.331e-010       &   8.483e-007            \\
             &        & L-BFGS           &   47   &  4.331e-010       &   8.483e-007            \\
             &        & CG-Descent 5.3   &   45   &  8.800e-009       &   6.115e-007            \\
\hline
denschnf     &  2     & mBFGS            &   10   &   4.325e-018      &    3.027e-008           \\ 
             &        & arcBFGS          &   8   &   2.088e-017      &    8.453e-007           \\
             &        & L-CG-Descent     &   8    &   2.126e-015      &    6.455e-007           \\
             &        & L-BFGS           &   8    &   2.126e-015      &    6.455e-007           \\
             &        & CG-Descent 5.3   &   11   &   1.104e-017      &    6.614e-008           \\
\hline
djtl         &  2     & mBFGS            &   79   &   -8.952e+003      &    2.265e-002           \\
             &        & arcBFGS          &   86   &   -8.952e+003      &    1.116e-003           \\
             &        & L-CG-Descent     &   82   &   -8.952e+003      &    8.865e-009           \\
             &        & L-BFGS           &   82   &   -8.952e+003      &    8.865e-009           \\
             &        & CG-Descent 5.3   &   93   &   -8.952e+003      &    3.521e-007           \\
\hline
engval2      &  3     & mBFGS            &   28   &  1.999e-018       &   9.405e-008            \\ 
             &        & arcBFGS          &   31   &  3.999e-014       &   3.155e-007            \\
             &        & L-CG-Descent     &   26   &  1.034e-016       &   8.236e-007            \\
             &        & L-BFGS           &   26   &  1.034e-016       &   8.236e-007            \\
             &        & CG-Descent 5.3   &   76   &  3.185e-014       &   5.682e-007            \\
\hline
expfit       &  2     & mBFGS            &   12   &   2.405e-001      &    2.916e-009           \\ 
             &        & arcBFGS          &   11   &   2.405e-001      &    6.788e-008           \\
     	     &        & L-CG-Descent     &   13   &   2.405e-001      &    4.208e-007           \\
             &        & L-BFGS           &   13   &   2.405e-001      &    4.208e-007           \\
             &        & CG-Descent 5.3   &   15   &   2.405e-001      &    1.758e-007           \\
\hline
growthls     &  3     & mBFGS            &   1    &   3.542e+003       &    0.000e-999           \\ 
             &        & arcBFGS          &   1    &   3.542e+003       &    6.788e-008           \\
     	     &        & L-CG-Descent     &   143  &   {\bf 1.004e+000} &    3.317e-007           \\
             &        & L-BFGS           &   143  &   {\bf 1.004e+000} &    3.317e-007           \\
             &        & CG-Descent 5.3   &   441  &   {\bf 1.004e+000} &    1.835e-007           \\
\hline
hairy        &  2     & mBFGS            &   19   &  2.000e+001       &   6.143e-008            \\ 
             &        & arcBFGS          &   52   &  2.000e+001       &   1.958e-009            \\
             &        & L-CG-Descent     &   36   &  2.000e+001       &   7.961e-011            \\
             &        & L-BFGS           &   36   &  2.000e+001       &   7.961e-011            \\
             &        & CG-Descent 5.3   &   14   &  2.000e+001       &   1.044e-007            \\
\hline
hatfldd      &  3     & mBFGS            &   24   &  6.615e-008       &   1.107e-007            \\ 
             &        & arcBFGS          &   25   &  6.616e-008       &   5.515e-006            \\
	         &        & L-CG-Descent     &   20   &  2.547e-007       &   1.936e-007            \\
	         &        & L-BFGS           &   20   &  2.547e-007       &   1.936e-007            \\
             &        & CG-Descent 5.3   &   40   &  6.617e-008       &   1.934e-007            \\
\hline
hatflde      &  3     & mBFGS            &   30   &  {\bf 4.434e-007} &   6.576e-007            \\ 
             &        & arcBFGS          &   32   &  {\bf 4.434e-007} &   3.404e-006            \\
	         &        & L-CG-Descent     &   30   &  2.000e+001       &   5.012e-007            \\
	         &        & L-BFGS           &   30   &  2.000e+001       &   5.012e-007            \\
             &        & CG-Descent 5.3   &   53   &  2.000e+001       &   5.012e-007            \\
\hline
heart6ls     &  6     & mBFGS            &   2266  &  2.865e-023       &   6.934e-009        \\ 
             &        & arcBFGS                &   848  &  3.815e-016       &   8.207e-006        \\
             &        & L-CG-Descent       &   684   &  2.646e-010       &   5.562e-007        \\
             &        & L-BFGS                  &   684   &  2.646e-010       &   5.562e-007        \\
             &        & CG-Descent 5.3     &   2570  &  1.305e-010       &   2.421e-007        \\
\hline
helix        &  3     & mBFGS            &   22   &   5.489e-017      &    1.349e-007           \\ 
             &        & arcBFGS          &   28   &   9.428e-017      &    1.207e-007           \\
             &        & L-CG-Descent     &   23   &   1.604e-015      &    3.135e-007           \\
             &        & L-BFGS           &   23   &   1.604e-015      &    3.135e-007           \\
             &        & CG-Descent 5.3   &   44   &   2.427e-013      &    6.444e-007           \\
\hline 
himmelbb     &  2     & mBFGS            &   1    &   9.665e-014      &    8.167e-008           \\ 
             &        & arcBFGS          &   7    &   1.774e-014      &    4.570e-006           \\
             &        & L-CG-Descent     &   10   &   9.294e-013      &    2.375e-007           \\
             &        & L-BFGS           &   10   &   9.294e-013      &    2.375e-007           \\
             &        & CG-Descent 5.3   &   11   &   1.584e-013      &    1.084e-008           \\
\hline
himmelbg     &  2     & mBFGS            &   7    &   1.070e-013      &    9.071e-007           \\ 
             &        & arcBFGS          &   6    &   7.153e-015      &    2.463e-007           \\
             &        & L-CG-Descent     &   8    &   9.294e-013      &    2.375e-007           \\
             &        & L-BFGS           &   8    &   9.294e-013      &    2.375e-007           \\
             &        & CG-Descent 5.3   &   10   &   1.584e-013      &    1.084e-008           \\
\hline
himmelbh     &  2     & mBFGS            &   7    &    -1.000e+000     &    5.026e-007           \\ 
             &        & arcBFGS          &   5    &    -1.000e+000     &    1.029e-007           \\
             &        & L-CG-Descent     &   7    &    -1.000e+000     &    2.892e-011           \\
             &        & L-BFGS           &   7    &    -1.000e+000     &    2.892e-011           \\
             &        & CG-Descent 5.3   &   7    &    -1.000e+000     &    1.381e-007           \\
\hline
humps        &  2     & mBFGS            &   104  &    3.280e-016     &    6.351e-009           \\ 
             &        & arcBFGS          &   67   &    1.749e-010     &    5.918e-006           \\
             &        & L-CG-Descent     &   53   &    3.682e-012     &    8.552e-007           \\
             &        & L-BFGS           &   53   &    3.682e-012     &    8.552e-007           \\
             &        & CG-Descent 5.3   &   48   &    3.916e-012     &    8.774e-007           \\
\hline
jensmp       &  2     & mBFGS            &   1    &    2.020e+003       &    0.000e-999           \\ 
             &        & arcBFGS          &   1   &    {\bf 1.244e+002} &    7.035e-008           \\
             &        & L-CG-Descent     &   15   &    {\bf 1.244e+002} &    5.302e-010           \\
             &        & L-BFGS           &   15   &    {\bf 1.244e+002} &    5.302e-010           \\
             &        & CG-Descent 5.3   &   13   &    {\bf 1.244e+002} &    4.206e-009           \\
\hline
kowosb       &  4     & mBFGS            &   28   &    3.075e-004     &    1.367e-007           \\ 
             &        & arcBFGS          &   25   &    3.075e-004     &    3.686e-006           \\
             &        & L-CG-Descent     &   17   &    3.078e-004     &    3.704e-007           \\
             &        & L-BFGS           &   17   &    3.078e-004     &    3.704e-007           \\
             &        & CG-Descent 5.3   &   66   &    3.078e-004     &    8.818e-007           \\
\hline
loghairy     &  2     & mBFGS            &   74   &    1.823e-001     &    5.904e-007           \\ 
             &        & arcBFGS          &   78  &    1.823e-001     &  3.686e-006           \\
             &        & L-CG-Descent     &   27   &    1.823e-001     &    1.762e-007           \\
             &        & L-BFGS           &   27   &    1.823e-001     &    1.762e-007           \\
             &        & CG-Descent 5.3   &   46   &    1.823e-001     &    7.562e-008           \\
\hline
mancino      &  100   & mBFGS            &   37   &     1.548e-020    &     1.414e-007          \\ 
             &        & arcBFGS          &   122  &     7.499e-018    &     7.663e-006          \\
             &        & L-CG-Descent     &   11   &     9.245e-021    &     7.239e-008          \\
             &        & L-BFGS           &   9    &     3.048e-021    &     1.576e-007          \\
             &        & CG-Descent 5.3   &   11   &     9.245e-021    &     7.239e-008          \\
\hline
maratosb     &  2     & mBFGS            &   3    &   -1.000e+000      &   5.142e-008            \\ 
             &        & arcBFGS          &   2    &   -1.000e+000      &   1.767e-008            \\
             &        & L-CG-Descent     &   1145 &   -1.000e+000      &   3.216e-007            \\
             &        & L-BFGS           &   1145 &   -1.000e+000      &   3.216e-007            \\
             &        & CG-Descent 5.3   &   946  &   -1.000e+000      &   3.230e-009            \\
\hline
mexhat       &  2     & mBFGS            &   5    &    -4.010e-002     &     1.426e-012          \\ 
             &        & arcBFGS          &  1    &    -4.010e-002     &     6.706e-006          \\
             &        & L-CG-Descent     &   20   &    -4.001e-002     &     4.934e-009          \\
             &        & L-BFGS           &   20   &    -4.001e-002     &     4.934e-009          \\
             &        & CG-Descent 5.3   &   27   &    -4.001e-002     &     3.014e-007          \\
\hline
osborneb     &  11    & mBFGS            &   53   &    4.014e-002     &     2.480e-007          \\ 
             &        & arcBFGS          &   49   &    4.014e-002     &     6.522e-006          \\
	         &        & L-CG-Descent     &   62   &    4.014e-002     &     4.427e-007          \\
	         &        & L-BFGS           &   62   &    4.014e-002     &     4.427e-007          \\
             &        & CG-Descent 5.3   &  214   &    4.014e-002     &     7.485e-007          \\
\hline
palmer1c     &  8     & mBFGS            &   32   &    9.760e-002     &     3.935e-007          \\ 
             &        & arcBFGS          &   33   &    9.760e-002     &     3.119e-004          \\
             &        & L-CG-Descent     &   11   &    9.761e-002     &     1.254e-009          \\
             &        & L-BFGS           &   11   &    9.761e-002     &     1.254e-009          \\
             &        & CG-Descent 5.3   & 126827 &    9.761e-002     &     9.545e-007          \\
\hline
palmer2c     &  8     & mBFGS            &   112  &   1.442e-002      &     8.296e-007          \\ 
             &        & arcBFGS          &   38   &   1.442e-002      &     3.648e-006          \\
             &        & L-CG-Descent     &   11   &   1.437e-002      &     1.257e-008          \\
             &        & L-BFGS           &   11   &   1.437e-002      &     1.257e-008          \\
             &        & CG-Descent 5.3   &  21362 &   1.437e-002      &     5.761e-007          \\
\hline
palmer3c     &  8     & mBFGS            &   47   &   1.954e-002      &     2.050e-008          \\ 
             &        & arcBFGS          &   38   &   1.954e-002      &     3.648e-006          \\
             &        & L-CG-Descent     &   11   &   1.954e-002      &     1.754e-010          \\
             &        & L-BFGS           &   11   &   1.954e-002      &     1.754e-010          \\
             &        & CG-Descent 5.3   &   5536 &   1.954e-002      &     9.753e-007          \\
\hline
palmer4c     &  8     & mBFGS            &   78   &   5.031e-002      &     2.235e-007          \\ 
             &        & arcBFGS          &   36   &   5.031e-002      &     9.770e-006          \\
             &        & L-CG-Descent     &   11   &   5.031e-002      &     3.928e-009          \\
             &        & L-BFGS           &   11   &   5.031e-002      &     3.928e-009          \\
             &        & CG-Descent 5.3   &  44211 &   5.031e-002      &     9.657e-007          \\
\hline
palmer5c     &  6     & mBFGS            &   13   &    2.128e+000     &      4.810e-009         \\ 
             &        & arcBFGS          &   11   &    2.128e+000     &      1.713e-007         \\
             &        & L-CG-Descent     &   6    &    2.128e+000     &      3.749e-012         \\
             &        & L-BFGS           &   6    &    2.128e+000     &      3.749e-012         \\
             &        & CG-Descent 5.3   &   6    &    2.128e+000     &      2.629e-009         \\
\hline
palmer6c     &  8     & mBFGS            &   56   &   1.639e-002      &     6.900e-007          \\ 
             &        & arcBFGS          &   60   &   1.639e-002      &     6.727e-006          \\
             &        & L-CG-Descent     &   11   &   1.639e-002      &     5.520e-009          \\
             &        & L-BFGS           &   11   &   1.639e-002      &     5.520e-009          \\
             &        & CG-Descent 5.3   &  14174 &   1.639e-002      &     7.738e-007          \\
\hline
palmer7c     &  8     & mBFGS            &   41   &   6.020e-001      &     5.201e-007          \\ 
             &        & arcBFGS          &   44   &   6.019e-001      &     1.343e-006          \\
             &        & L-CG-Descent     &   11   &   6.020e-001      &     7.132e-009          \\
             &        & L-BFGS           &   11   &   6.020e-001      &     7.132e-009          \\
             &        & CG-Descent 5.3   & 65294  &   6.020e-001      &     9.957e-007          \\
\hline
palmer8c     &  8     & mBFGS            &   48   &   1.598e-001      &     1.099e-009          \\ 
             &        & arcBFGS          &   50   &   1.598e-001      &     1.565e-006          \\
             &        & L-CG-Descent     &   11   &   1.598e-001      &     2.376e-009          \\
             &        & L-BFGS           &   11   &   1.598e-001      &     2.376e-009          \\
             &        & CG-Descent 5.3   &   8935 &   1.598e-001      &     9.394e-007          \\
\hline
rosenbr      &  2     & mBFGS            &   32   &   1.383e-016      &     4.603e-007          \\ 
             &        & arcBFGS          &   23   &   2.171e-015      &     9.558e-006          \\
             &        & L-CG-Descent     &   34   &   4.691e-018      &     7.167e-008          \\
             &        & L-BFGS           &   34   &   4.691e-018      &     7.167e-008          \\
             &        & CG-Descent 5.3   &   37   &   1.004e-014      &     1.894e-007          \\
\hline
sineval      &  2     & mBFGS            &   69   &   1.910e-019      &     1.168e-008          \\ 
             &        & arcBFGS          &   65   &   2.050e-015      &     1.175e-006          \\
             &        & L-CG-Descent     &   60   &   1.556e-023      &     1.817e-011          \\
             &        & L-BFGS           &   60   &   1.556e-023      &     1.817e-011          \\
             &        & CG-Descent 5.3   &   62   &   1.023e-012      &     5.575e-007          \\
\hline
sisser       &  2     & mBFGS            &   19   &    3.860e-010     &     4.587e-007          \\ 
             &        & arcBFGS          &   16   &    3.015e-008     &     9.980e-006          \\
             &        & L-CG-Descent     &   6    &    6.830e-012     &     2.220e-008          \\
             &        & L-BFGS           &   6    &    6.830e-012     &     2.220e-008          \\
             &        & CG-Descent 5.3   &   6    &    3.026e-014     &     3.663e-010          \\
\hline
tointqor     &  50    & mBFGS            &   39   &   1.176e+003      &     4.033e-007          \\ 
             &        & arcBFGS          &   35   &   1.175e+003      &     4.874e-006          \\
             &        & L-CG-Descent     &   29   &   1.175e+003      &     4.467e-007          \\
             &        & L-BFGS           &   28   &   1.175e+003      &     7.482e-007          \\
             &        & CG-Descent 5.3   &   29   &   1.175e+003      &     4.464e-007          \\
\hline
vardim       &  200   & mBFGS            &   22   &   1.237e-021      &     1.376e-009          \\ 
             &        & arcBFGS          &   21   &   5.428e-008      &     9.980e-006          \\
             &        & L-CG-Descent     &   10   &   4.168e-019      &     2.582e-007          \\
             &        & L-BFGS           &   7    &   5.890e-025      &     3.070e-010          \\
             &        & CG-Descent 5.3   &   10   &   4.168e-019      &     2.582e-007          \\
\hline
watson       &  12    & mBFGS            &   61   &   1.130e-008      &     3.081e-007          \\ 
             &        & arcBFGS          &   46   &   2.584e-007      &     8.722e-006          \\
             &        & L-CG-Descent     &   49   &   1.592e-007      &     8.026e-007          \\
             &        & L-BFGS           &   48   &   9.340e-008      &     1.319e-007          \\
             &        & CG-Descent 5.3   &   726  &   1.139e-007      &     8.115e-007          \\
\hline
yfitu        &  2     & mBFGS            &   73   &   6.670e-013      &     1.938e-007          \\ 
             &        & arcBFGS          &   63  &   6.704e-013      &     4.874e-006          \\
             &        & L-CG-Descent     &   75   &   8.074e-010      &     3.910e-007          \\
             &        & L-BFGS           &   75   &   8.074e-010      &     3.910e-007          \\
             &        & CG-Descent 5.3   &   147  &   2.969e-011      &     5.681e-007          \\
\hline
\end{longtable}
\end{center}
\normalsize

We summarize the comparison of the test results as follows:
\begin{itemize}
\item[1.] Generally, all algorithms find the same optimal solutions for most problems. But for $7$ problems ({\tt arglina}, {\tt bard}, {\tt biggs6}, {\tt denschnc}, {\tt growthls}, {\tt hatflde}, and {\tt jensmp}), these algorithms find different local minimums. Among them, {\tt arcBFGS} finds the best local minimums $6$ times; {\tt mBFGS}, {\tt L-CG-Descent}, and {\tt L-BFGS} find the best local minimums $4$ times; finally {\tt CG-Descent 5.3} finds the best local minimums $3$ times.
\item[2.] All algorithms find the optimal solutions for all problems but {\tt mBFGS} and {\tt arcBFGS} terminate early for problem {\tt djtl} (although they find the optimal solution). However, If we use (\ref{arcBFGS}) and (\ref{dirMBFGS}) instead of (\ref{E1}) and (\ref{calMBFGS}) in the calculation of the search direction $d_k$, {\tt mBFGS} and {\tt arcBFGS} converge and meet the criterion $\| g(x_k) \|< 10^{-5}$. Given the fact that using (\ref{E1}) and (\ref{calMBFGS}) does not require to solve the linear systems of equations but using (\ref{arcBFGS}) and (\ref{dirMBFGS}) does, we suggest using the implementation of (\ref{E1}) and (\ref{calMBFGS}).
\end{itemize}

The performance profiles for {\tt mBFGS}, {\tt arcBFGS}, {\tt L-CG-Descent}, {\tt L-BFGS}, and {\tt CG-Descent 5.3} are presented in Figure~\ref{fig:profile2}. Clearly, the proposed arc-search BFGS method requires the fewest iterations overall among all the algorithms considered. It is followed by the robust BFGS implementation {\tt mBFGS}, which also requires fewer iterations than the remaining algorithms on the tested problems. The performances of {\tt L-CG-Descent} and {\tt L-BFGS} are comparable, and both require fewer iterations than {\tt CG-Descent 5.3}.

\begin{figure}[ht]
\centerline{\epsfig{file=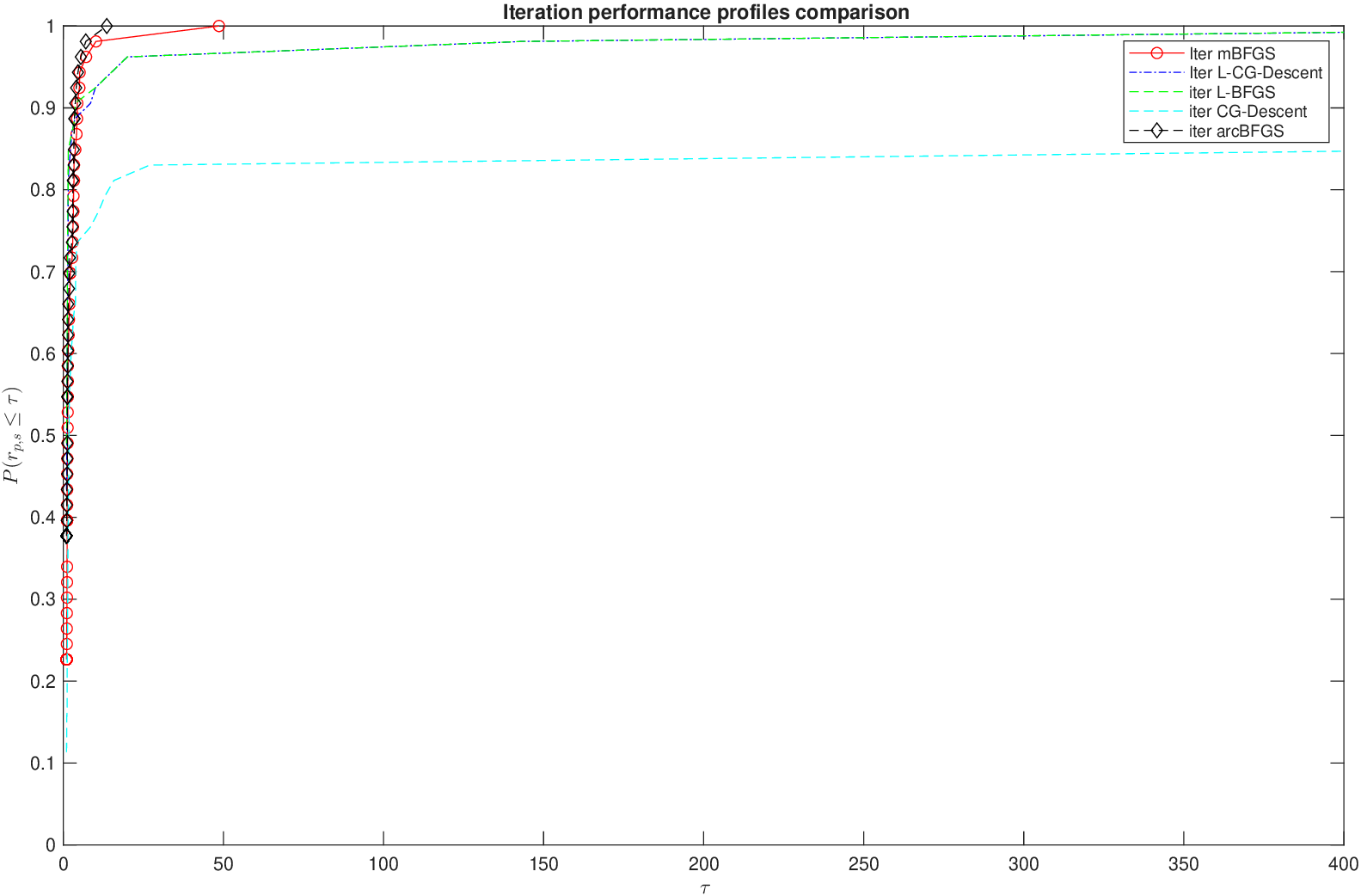,height=6cm,width=10cm}}
\caption{Performance profiles comparison for {\tt mBFGS}, {\tt arcBFGS}, 
{\tt L-CG-Descent}, {\tt L-BFGS}, {\tt CG-Descent 5.3}, and {\tt arcBFGS}.}
\label{fig:profile2}
\end{figure}

\section{Conclusions}\label{conclusions}
We have proposed an arc-search BFGS algorithm that is globally convergent to a local optimum and achieves a superlinear convergence rate for both convex and nonconvex optimization problems under mild assumptions. We also showed that the computational cost per iteration of the proposed arc-search BFGS algorithm is nearly the same as that of the classical BFGS method. Furthermore, we presented numerical experiments comparing the performance of the proposed arc-search BFGS algorithm with several established and state-of-the-art methods, including the classical BFGS method, limited-memory BFGS, descent and conjugate gradient methods, and limited-memory descent and conjugate gradient methods. The numerical results demonstrate that the proposed arc-search BFGS algorithm is both efficient and effective in practice.

\section{Acknowledgments}
The author thanks Professor Robert Vanderbei of Princeton University for providing access to his website~\cite{web}, which includes the CUTEst test problems used in this paper. The author is also grateful to Professor Teresa Monteiro of the Universidade do Minho for assistance in converting CUTEst AMPL files to .nl files, and to Professor António Ismael Freitas Vaz, also of the Universidade do Minho, for providing the {\tt matampl} function, which enables the use of MATLAB to test CUTEst nonlinear optimization problems.

\section{Conflict of interest}

The author declares no conflict of interest.

\section{Code availability}

The MATLAB code and test problems used in this study are available from the author upon reasonable request.



\end{document}